\newcommand{\gap}{\vspace{0.1in}}
\newcommand{\wt}{\widetilde}
\newcommand{\tr}{ \mbox{Tr}}
\newcommand{\Argmin}{\mbox{Argmin}}
\newcommand{\Ical}{\mathcal I}
\newcommand{\Jcal}{\mathcal J}
\newcommand{\Lcal}{\mathcal L}
\newcommand{\diag}{{\rm diag}}
\newcommand{\rank}{\mbox{rank}}
\newcommand{\mycut}[1]{{}}
\newtheorem{lemma}{Lemma}[section] 
\newtheorem{proposition}{Proposition}[section] 
\newtheorem{remark}{Remark}[section]
\begin{document}

\title{A Penalty Decomposition Algorithm with Greedy  Improvement for Mean-Reverting Portfolios with Sparsity and Volatility Constraints}

\author{Ahmad Mousavi\thanks{A. Mousavi  is with the Institute for Mathematics and its Applications, University of Minnesota, Minneapolis, MN 55455, USA. Email: {\tt\small amousavi@umn.edu}.}
\qquad  and  \qquad Jinglai Shen\thanks{J. Shen is with the Department of Mathematics and Statistics, University of Maryland Baltimore County, Baltimore, MD 21250, USA. Email:    {\tt\small shenj@umbc.edu}.}}

\maketitle

\begin{abstract}
Mean-reverting portfolios with few assets, but high variance, are of great interest for investors in financial markets. Such portfolios are straightforwardly profitable because they include a small number of assets whose prices not only oscillate predictably around a long-term mean but also possess enough volatility.  Roughly speaking, sparsity minimizes trading costs, volatility provides arbitrage opportunities, and mean-reversion property equips investors with ideal investment strategies. Finding such favorable portfolios can be formulated as a nonconvex quadratic optimization problem with an additional sparsity constraint. To the best of our knowledge, there is no method for solving this problem and enjoying favorable theoretical properties yet. In this paper, we develop an effective two-stage algorithm for this problem. In the first stage, we apply a tailored penalty decomposition method for finding a stationary point of this nonconvex problem. For a fixed penalty parameter, the block coordinate descent method is utilized to find a stationary point of the associated penalty subproblem. In the second  stage, we improve the result from the first stage  via a greedy scheme that solves restricted nonconvex quadratically constrained quadratic programs (QCQPs).  We show that the optimal value of such a QCQP can be obtained by solving their semidefinite relaxations. Numerical experiments on S\&P 500 are conducted to demonstrate the effectiveness of the proposed algorithm.

\end{abstract}

%
\section{Introduction} \label{sec: intro}

Mean-reversion property plays a significant role in mathematical finance \cite{cuturi2016mean,d2005direct,zhao2019optimal}. In constructing trading portfolios/baskets, this profitable property is traditionally pursued using classical tools in cointegration theory, which often detect a linear combination of assets that are stationary, and consequently, mean-reverting \cite{johansen2009cointegration}. However, such baskets are not practically favorable because they turn out to own many assets that suffer from low volatility. This implies that when the incurred trading costs are not negligible, an investor does not benefit from trading such baskets, that is, sufficient variance provides arbitrage opportunities. Thus, finding mean-reverting portfolios with enough variance has recently attracted much attention; see \cite{zhou2021solving, zhao2019optimal, cuturi2013mean}. Another favorable property for a portfolio is sparsity that helps to minimize trading costs. Consequently, sparse mean-reverting portfolios have been studied \cite{zhang2020sparse, sipos2013optimizing, d2011identifying, fogarasi2013sparse, fogarasi2012simplified, long2018three,sipos2015optimizing}. Sparsity has also shown to be advantageous in numerous applications \cite{shen2018least,mousavi2020survey,mousavi2019quadratic,mousavi2019solution,shen2019exact,mohammadi2020statistical,
mohammadi2021finite,mohammadi2021ultrasound}. While a realistic and practical portfolio should enjoy mean-reversion, volatility, and sparsity properties simultaneously,  there is no method that can effectively solve this problem and capture realistic portfolios, to the best of our knowledge.

Recently, several statistical proxies have been introduced to capture mean-reversion property \cite{cuturi2016mean, d2005direct}.
 In this paper, we consider the following optimization problem that aims to minimize the predictability notion introduced by Box and Tiao \cite{box1977canonical} while ensuring sparsity and volatility:
\begin{equation} \label{pr: p}
(P): \qquad \min_{x \in \mathbb{R}^N} \ x^T M x \qquad \mbox{subject to}  \qquad  x^T A x\ge \phi, \quad  x^T x=1, \quad \text{and}\quad \|x\|_0\le k,
\end{equation}
where  $M, A\in \mathbb R^{N\times N}$ are symmetric and positive definite, $\phi$ is a positive number, $\|\cdot\|_0$ denotes the number of nonzero entries of a vector, and $k\in \mathbb N$ with $k\ll N$.
As mentioned before, the only method proposed for solving this problem based on the following semidefinite program (SDP) relaxation \cite{cuturi2016mean}:
\begin{equation} \label{pr: sdp-of-p}
\min_{Y\in \mathcal S^N}  \quad
\tr(MY)+\rho \|Y\|_1         \qquad
\textrm{subject to} \qquad
 \tr(AY)\ge \phi, \quad  \tr(Y)=1, \quad  \text{and} \quad Y\succeq 0,
\end{equation}
where $\rho>0$ is a penalty parameter, and $\|Y\|_1:=\sum_{i,j} |Y_{ij}|$. The $\ell_1$ norm  promotes  the sparsity of the decision variable $Y$. After solving the convex SDP (\ref{pr: sdp-of-p}) and obtaining a solution $Y^*$, the authors in \cite{d2005direct}  apply sparse PCA to $Y^*$ for recovering a solution $y^*$ of (\ref{pr: p}). The major drawbacks of this method are as follows. A solution of
(\ref{pr: sdp-of-p}) may not be of low-rank in general. This hinders one from obtaining a rank-one solution of (\ref{pr: sdp-of-p}) to recover a solution for the original problem (\ref{pr: p}). Hence, in general, the SDP relaxation formulation (\ref{pr: sdp-of-p}) may give rise to a solution quite different from that of (\ref{pr: p}).
%
%
In \cite{cuturi2016mean}, the authors suggest to solve the following sparse PCA if a rank-one solution is available:
\begin{equation} \label{pr: sparse-pca}
(s-PCA): \qquad \min_{y\in \mathbb{R}^N} \ y^THy \qquad \mbox{subject to}  \qquad  y^Ty=1, \quad \text{and}\quad \|y\|_0\le k.
\end{equation}
We emphasize again that obtaining a rank-one solution of (\ref{pr: sdp-of-p}) is not guaranteed even though it could be the case in practice. Further, no theoretical results are established for the qualitative properties of an output of the sparse PCA with respect to (\ref{pr: p}).
%
%
In summary, the SDP relaxation formulation (\ref{pr: sdp-of-p}) does not necessarily yield a solution, or a related solution such as a stationary point, to (\ref{pr: p}) in general.

In view of the drawbacks of (\ref{pr: sdp-of-p}), we propose an effective two-stage algorithm for solving (\ref{pr: p}) that not only attains favorable theoretical properties but also achieves satisfactory numerical performance. In the first stage, we apply a tailored penalty decomposition method that finds a stationary point of (\ref{pr: p}). When applying this method, each penalty subproblem is nonconvex but decomposed such that we apply block coordinate minimization to find a stationary point of a penalty subproblem. The restricted subproblems for the block coordinate minimization are tractable since one subproblem attains a closed-form solution, and the other subproblem can be handled via its SDP relaxation that provably achieves the exact optimal value and further finds a rank-one solution corresponding to a solution of the original subproblem efficiently. In the second stage, we propose a greedy scheme that starts from the stationary point obtained from the first stage and then improves upon it via solving sparsity-sized semidefinite programs. This greedy scheme stops once an index set cannot be further improved. We show that the SDP used in this step indeed achieves the exact optimal value as its original nonconvex QCQP. We carry out numerical testes and compare the proposed algorithm  with the method in \cite{cuturi2016mean} on the S\&P 500 assets. The numerical results show that our algorithm outperforms the latter method.

The rest of the paper is organized as follows. In Section \ref{sec: greedy_penalty_decomposition_method}, we discuss optimality conditions and give an overview of the proposed two-stage algorithm. Section \ref{sec:finding_stationary_point} studies the first stage in detail and establishes theoretical properties. In Section \ref{sec: greedy_stage}, a greedy scheme is proposed to improve a stationary point obtained from stage one. Numerical experiments and results are shown and discussed in Section \ref{sec: numerical_experiments}.


{\it Notation}.
For a set $S$, we denote its complement as either $S^c$ or $\bar S$ and its cardinality as $|S|$. For a natural number $N$, let   $[N]$ be$\{1,2,\dots,N\}$. For $S=\{i_1,i_2\dots, i_{|S|}\} \subseteq [N]$,  let $x_S\in \mathbb R^{|S|}$ is the coordinate projection of $x$ with respect to indices in $S$. For vectors $x_i\in \mathbb R^{N_i}$ with $i\in [N]$, we denote their column concatenation vector in $\mathbb R^{\sum_{i=1}^N N_i}$ using Matlab notation  as $[x_1;x_2;\dots;x_N]$.
Consequently, a $k$-sparse vector $x\in \mathbb R^N$ supported on $S\subseteq [N]$ with $|S|=k$ is written as $x=[x_S;0]$.
Similarly, if  $N_i=m$ for all $i\in [N]$, we denote the row concatenation matrix  of vectors  $x_i\in \mathbb R^{m}$  as $[x_1 \, x_2\, \dots\, x_N]\in \mathbb R^{m\times N}$. These notations are also used for matrices of suitable sizes.
The space of  $N\times N$ symmetric matrices is denoted by $\mathcal S^N$. We  write  $A\succ 0$ and  $A\succeq 0$ for positive  definiteness and semi-definiteness of $A$, respectively.
The smallest and largest eigenvalues of a symmetric matrix $A$ are denoted by $\lambda_{\min}(A)$ and $\lambda_{\max}(A)$.
%
%
The identity matrix of size $m$ is denoted by $I_m$. A diagonal matrix $D$ with diagonal entries $d_1,d_2,\dots,d_m$ is denoted by $D=\diag(d_1,d_2,\dots, d_m)$.  The trace of a square matrix $A$ is denoted by $\tr(A)$.

%

%

%
\section{Penalty Decomposition Algorithm with  Greedy  Improvements} \label{sec: greedy_penalty_decomposition_method}

%
\subsection{Optimality Condition of the Mean Reverting Problem}

%
%

The problem $(P)$ in (\ref{pr: p}) attains an optimal solution if it is feasible.
Let $x^* \in \mathbb R^N$ be a local minimizer of the problem $(P)$, and $\Ical$ be an index subset of $\{1, \ldots, N\}$ such that $|\Ical| = k$, and $x^*_{\Ical^c}=0$. Note that $\Ical$ may not be the support of $x^*$.
It is easy to show that $x^*$ is also a local minimizer of the problem
\[
  (P'): \quad  \min_{x \in \mathbb R^N} x^T M x, \qquad \mbox{ subject to } \quad x^T A x \ge \phi, \ \ x^T x =1, \quad \mbox{ and } \quad x_{\Ical^c}=0,
\]
Hence, the Robinson's condition for a local minimizer $x^*$ of $(P')$ with the index set $\Ical^c$ with $x^*_{\Ical^c}=0$ is \cite[Eqn.(3.11)]{ruszczynski2011nonlinear}
%
%
\begin{itemize}
   \item [(i)] $(x^*)^T A x^*> \phi$.
     $
        \left\{  \begin{bmatrix} -2 d^T  A x^*  - v \\ 2 d^T x^* \\ d_{\Ical^c} \end{bmatrix} \  \Big | \ \  d \in \mathbb R^{N}, \ \ v \in \mathbb R \right\} = \mathbb R \times \mathbb R \times \mathbb R^{|\Ical^c|};
     $
   \item [(ii)]  $(x^*)^T A x^*= \phi$.
     $
        \left\{  \begin{bmatrix} -2 d^T  A x^*- v \\ 2 d^T x^* \\ d_{\Ical^c} \end{bmatrix} \  \Big | \ \  d \in \mathbb R^{N}, \ \ v \in \mathbb R_- \right\} = \mathbb R \times \mathbb R \times \mathbb R^{|\Ical^c|}.
     $
\end{itemize}
Since $\| x^*\|_2=1$ and $x^*_{\Ical^c}=0$, we have $x^*_\Ical \ne 0$. Hence, the Robinson's condition for case (i) always holds. For case (ii), the necessary and sufficient condition for Robinson's condition is given below.
%
%
\begin{lemma} \label{lem:Robison_condition_ii}
  Suppose a local minimizer $x^*$ of $(P)$ is such that $(x^*)^T A x^*= \phi$ and $x^*_{\Ical^c}=0$. Then Robinson's condition holds if and only if $\{ (A x^*)_\Ical, x^*_\Ical\}$ is linearly independent.
\end{lemma}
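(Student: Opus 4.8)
The plan is to unravel Robinson's condition (ii) into a purely two-dimensional surjectivity question and then settle it by elementary linear algebra. First I would use the freedom in the block $d_{\Ical^c}$: the third component of the generating set equals $d_{\Ical^c}$ outright, so it can be set to any prescribed $c\in\mathbb R^{|\Ical^c|}$; because $x^*_{\Ical^c}=0$, this choice leaves the second component $2d^Tx^*=2d_{\Ical}^Tx^*_{\Ical}$ untouched and only adds the constant $-2c^T(Ax^*)_{\Ical^c}$ to the first component. Hence Robinson's condition (ii) is equivalent to
\[
  S_0:=\left\{\begin{bmatrix}-2d_{\Ical}^T(Ax^*)_{\Ical}-v\\ 2d_{\Ical}^Tx^*_{\Ical}\end{bmatrix}\ \Big|\ d_{\Ical}\in\mathbb R^{|\Ical|},\ v\in\mathbb R_-\right\}=\mathbb R^2,
\]
which, writing $L\colon\mathbb R^{|\Ical|}\to\mathbb R^2$ for the linear map whose rows are $-2(Ax^*)_{\Ical}^T$ and $2(x^*_{\Ical})^T$, reads $\mbox{Range}(L)+(\mathbb R_{\ge 0}\times\{0\})=\mathbb R^2$.

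The two implications then follow quickly. If $\{(Ax^*)_\Ical,x^*_\Ical\}$ is linearly independent, the rows of $L$ are independent, so $\rank(L)=2$, $\mbox{Range}(L)=\mathbb R^2$, and $S_0=\mathbb R^2$; Robinson's condition holds. Conversely, assume the pair is linearly dependent. Since $\|x^*\|_2=1$ and $x^*_{\Ical^c}=0$ force $x^*_\Ical\neq 0$, the second row of $L$ is nonzero, hence $\rank(L)=1$ and $\mbox{Range}(L)$ is a line through the origin; moreover $(Ax^*)_\Ical=\alpha\,x^*_\Ical$ with $\alpha=\phi/\|x^*_\Ical\|_2^2>0$, using $(x^*_\Ical)^T(Ax^*)_\Ical=(x^*)^TAx^*=\phi>0$. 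A line through the origin plus the half-line $\mathbb R_{\ge 0}\times\{0\}$ always lies in a closed half-plane (here $\{(s_1,s_2):s_1+\alpha s_2\ge 0\}$), so $S_0\subsetneq\mathbb R^2$ and Robinson's condition fails. Combining the two cases gives the stated equivalence.

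I expect the one delicate point — and the step I would write out most carefully — to be the treatment of the sign-constrained direction $v\in\mathbb R_-$ in the ``only if'' part: one has to confirm that adjoining the half-line $\{(-v,0):v\le 0\}$ to a one-dimensional range still cannot cover all of $\mathbb R^2$. The reduction via $d_{\Ical^c}$ and the rank computations are otherwise routine.
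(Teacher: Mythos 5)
Your proof is correct and follows essentially the same route as the paper's: linear independence makes the two-dimensional map $d_\Ical\mapsto\bigl(-2d_\Ical^T(Ax^*)_\Ical,\,2d_\Ical^Tx^*_\Ical\bigr)$ surjective, while dependence forces $(Ax^*)_\Ical=\alpha x^*_\Ical$ with $\alpha>0$ and traps the attainable set in a proper half-plane (the paper phrases this as missing the open negative quadrant, and obtains $\alpha>0$ from positive definiteness of $A_{\Ical\Ical}$ rather than from $(x^*)^TAx^*=\phi>0$, but these are cosmetic differences). Your explicit reduction to the set $S_0$ via the choice of $d_{\Ical^c}$ is a slightly more careful rendering of the paper's ``let $d_{\Ical^c}=0$'' step; no gaps.
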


\begin{proof}
 ``If'': suppose $\{ (A x^*)_\Ical, x^*_\Ical\}$ is linearly independent. Then $\big\{ (d^T_{\Ical}  (A x^*)_\Ical, d^T_{\Ical} x^*_\Ical) \in \mathbb R^2 \, | \, d_{\Ical} \in \mathbb R^{|\Ical|} \big\} = \mathbb R^2$. Clearly, this yields Robinson's condition for case (ii).
%
%
 To see ``Only If'', suppose Robinson's condition holds but $\{ (A x^*)_\Ical, x^*_\Ical\}$ is linearly dependent. Since $x^*_{\Ical^c} = 0$, $x^*_\Ical \ne 0$, and $A$ is PD, we have $(A x^*)_\Ical = A_{\Ical \Ical} \cdot x^*_\Ical = \nu  \cdot x^*_\Ical$ for some positive constant $\nu>0$. Let $d_{\Ical^c}=0$. By Robinson's condition, we must have that the set $S:=\{ ( -2 z^T  (A x^*)_\Ical - v, 2 z^T x^*_\Ical) \, | \, z \in \mathbb R^{|\Ical|}, v \le 0 \} = \mathbb R^2$. On the other hand, for any $z$ with $z^T x^*_\Ical \le 0$ and $v \le 0$, we have $-2 z^T  (A x^*)_\Ical - v = -2 \nu \cdot (z^T x^*_\Ical) - v \ge 0$. Therefore, $S$ does not contain the interior of $\mathbb R_-\times \mathbb R_-$, yielding a contradiction to $S=\mathbb R^2$. This shows that $\{ (A x^*)_\Ical, x^*_\Ical\}$ must be linearly independent.
\end{proof}

Under the Robinson's condition, the first-order optimality condition (i.e., the KKT condition) for a local minimizer $x^*$ of $(P)$ (or $(P')$) is: there exist $\lambda, \mu \in \mathbb R$ and $w = (w_\Ical, w_{\Ical^c})\in \mathbb R^N$  such that
\begin{equation} \label{eqn:KKT_conditions}
   M x^* - \lambda  A x^* +  \mu x^* + w = 0, \quad 0 \le \lambda \perp (x^*)^T A x^* - \phi \ge 0, \quad \|x^* \|_2 =1, \ \ w_\Ical=0, \ \ x^*_{\Ical^c} = 0.
\end{equation}

%
\subsection{Overview of Penalty Decomposition Algorithm with  Greedy Improvement}

This paper develops a penalty decomposition scheme \cite{lu2013sparse} along with a greedy algorithm to solve the mean-reverting problem $(P)$ in (\ref{pr: p}). We provide an overview of the proposed algorithm in this subsection.

The entire algorithm consists of two stages. In the first stage, the penalty decomposition scheme \cite{lu2013sparse} is exploited to obtain a stationary point of the problem $(P)$.
The penalty decomposition scheme solves a sequence of simpler penalty subproblems using the block coordinate decent (BCD) method at each step. Under some mild assumptions, a subsequence of the iterations yielded by the penalty decomposition scheme converges to a stationary point of $(P)$.
In the second stage, a greedy algorithm is applied to improve the result found from stage one. This greedy algorithm updates the associated support set of the current iterates by solving a sequence of restricted nonconvex QCQPs. Further, such a restricted nonconvex QCQP can be efficiently solved via SDP relaxation which achieves  the exact optimal value of the nonconvex QCQP.
The greedy algorithm stops once the support set cannot be improved.
The entire algorithm is summarized in Algorithm \ref{algo: ystar-p}.
\begin{algorithm}[H]
\caption{Penalty Decomposition Method with Greedy Improvement for Solving Problem (\ref{pr: p})}
\begin{algorithmic}[1]
\label{algo: ystar-p}
\STATE Inputs: $N\times N$ positive definite matrices $M$ and $A$,   $\phi>0$, and $k\in \mathbb N$.
\STATE \fbox{Stage 1:} run penalty decomposition scheme (cf. Algorithm \ref{algo:PD_scheme}).
%
%
Take its output as the index set $\Lcal$ with $|\Lcal|=k$ associated with a stationary point $x^*$.

\STATE \fbox{Stage 2:} run a greedy algorithm (cf. Algorithm \ref{algo:greedy_scheme}) using $\Lcal$ 

\end{algorithmic}
\end{algorithm}

%
\section{Stage One: Penalty Decomposition Scheme} \label{sec:finding_stationary_point}

We show how to find a stationary point of the nonconvex problem (\ref{pr: p}) via a penalty decomposition scheme \cite{lu2013sparse}. By introducing a new variable $y \in \mathbb R^N$, we reformulate (\ref{pr: p}) as:
\begin{equation}\label{pr: pxy} 
\min_{(x,y)\in \mathbb{R}^N \times \mathbb R^N} \ x^TMx\qquad \mbox{subject to}  \qquad  x^TAx\ge \phi, \quad y^Ty=1,  \quad  \|y\|_0\le k, \quad \text{and} \quad  x-y=0.
\end{equation}
Define the sets
\begin{equation*} 
    \mathcal X:=\{x\in \mathbb{R}^N \, | \, x^TAx\ge \phi\} \qquad \mathrm{and} \qquad \mathcal Y:=\{y\in \mathbb{R}^N \, | \,  y^T y=1 \quad \text{and} \quad \|y\|_0\le k\},
\end{equation*}
and the quadratic penalty function $q_{\rho}(x,y):= x^ TMx +\rho \|x-y\|_2^2$ for a given positive constant $\rho$. Consider the following problem:
\begin{equation} \label{eqn:P_x_y}
  (P_{x, y}): \quad \min_{(x, y) \in \mathbb R^N \times \mathbb R^N} q_{\rho}(x, y), \qquad \mbox{ subject to }  \qquad x \in \mathcal X, \quad y \in \mathcal Y.
\end{equation}
Clearly, its solution exists. Given a (local) minimizer $(x_*, y_*)$ of $ (P_{x, y})$, define the index set $\mathcal L \subseteq\{1, \ldots, N \}$ such that $|\Lcal|=k$ and $y^*_{\Lcal^c}=0$.
It is easy to show that $(x_*, y_*)$ is a local minimizer of the problem $(P_{x, y})$ when $\mathcal Y$ is replaced by $\mathcal Y' := \{ y \in \mathbb R^N \, | \, \| y \|_2=1, \ y_{\Lcal^c}=0 \}$. We denote the latter problem (with $\mathcal Y'$ instead of $\mathcal Y$) by $(P'_{x, y})$. It is easy to show that the Robinson's condition holds for $(P'_{x, y})$ at any feasible $(x, y) \in \mathbb R^N\times \mathbb R^N$. Hence, the KKT condition for a local minimizer $(x_*, y_*)$ of $(P_{x, y})$ (or equivalently $(P'_{x, y})$) is: there exists $\lambda, \mu \in \mathbb R$ such that
\begin{equation} \label{eqn:KKT_P_x_y}
   M x_* - \lambda A x_* + \rho (x_* - y_*)= 0, \ (\rho+\mu) (y_*)_{\Lcal} = \rho (x_*)_{\Lcal}, \ \ 0 \le \lambda \perp x_*^T A x_* - \phi \ge 0, \  \|y_* \|_2 =1, \  (y_*)_{\Lcal^c} = 0.
\end{equation}

The paper \cite{lu2013sparse} develops a block coordinate decent (BCD) scheme given in Algorithm~\ref{algo:BCD_scheme} to compute a saddle point of $(P_{x, y})$, i.e., $(x_*, y_*) \in \mathcal X \times \mathcal Y$ such that
\begin{equation} \label{eqn:saddle_point}
  x_* \in \mbox{Argmin}_{x\in \mathcal X} \ q_\rho(x, y_*), \qquad  y_* \in \mbox{Argmin}_{y\in \mathcal Y} \ q_\rho(x_*, y).
\end{equation}
We discuss the two subproblems in the above formulation as follows.

Given $y \in \mathbb R^N$ and $\rho>0$, consider the problem
\begin{equation} \label{pr: px}
 (P_x): \qquad \min_{ x \in \mathbb R^N} x^T M x + \rho \| x - y \|^2_2, \qquad \mbox{ subject to }  \qquad x^T A x \ge \phi.
\end{equation}
Clearly, its optimal solution exists and the constraint qualification holds. Hence, the KKT condition for a (local) minimizer $x_*$ of $(P_x)$ is given by: there exists $\lambda \in \mathbb R$ such that
\[
   M x_* + \rho(x_* - y) -  \lambda A x_* =0, \qquad 0 \le \lambda \perp  x^T_* A x_* - \phi \ge 0.
\]
The following result shows that for any given $y$, an optimal solution $x_*$ to $(P_x)$ is bounded.

\begin{lemma} \label{lem:opt_bound_P_x}
  Given $\rho>0$ and $y \in \mathbb R^N$, a local minimizer $x_*$ of $(P_x)$ satisfies $$\| x_*\|_2 \le \max\left( \sqrt{\frac{\phi}{\lambda_{\min}(A)} }, \ \|y \|_2 \right).$$
\end{lemma}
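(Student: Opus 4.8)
The plan is to split the analysis into two cases according to whether the volatility constraint $x^TAx\ge\phi$ is active at the local minimizer $x_*$, and to bound $\|x_*\|_2$ separately in each case; the stated bound is then the maximum of the two case bounds.

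First, I would dispose of the \emph{active} case, $x_*^TAx_*=\phi$. Since $A\succ0$, the Rayleigh quotient inequality gives $\phi = x_*^TAx_* \ge \lambda_{\min}(A)\,\|x_*\|_2^2$, hence $\|x_*\|_2 \le \sqrt{\phi/\lambda_{\min}(A)}$. This half is immediate and uses only positive definiteness of $A$ and positivity of $\phi$.

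Next comes the \emph{inactive} case, $x_*^TAx_* > \phi$. The key observation is that, by continuity, there is a neighborhood of $x_*$ on which the constraint is strictly satisfied, so on that neighborhood $x_*$ is an unconstrained local minimizer of $f(x):=x^TMx+\rho\|x-y\|_2^2$. Since $M\succ 0$ and $\rho>0$, $f$ is strictly convex, so its unique stationary point is its global minimizer; solving $\nabla f(x_*)=0$ yields $(M+\rho I_N)x_* = \rho y$, i.e.\ $x_* = \rho(M+\rho I_N)^{-1}y$. Then I would estimate the operator norm: $\rho(M+\rho I_N)^{-1}$ is symmetric positive definite with largest eigenvalue $\rho/(\lambda_{\min}(M)+\rho)\le 1$ (using $M\succeq 0$), so $\|x_*\|_2 \le \|y\|_2$.

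Combining the two cases gives $\|x_*\|_2\le\max\big(\sqrt{\phi/\lambda_{\min}(A)},\,\|y\|_2\big)$, as claimed. The argument is essentially routine; the only step needing a little care is the inactive case, where one must first localize to a feasible neighborhood and invoke strict convexity of $f$ to pin down $x_*$ as $\rho(M+\rho I_N)^{-1}y$ before estimating its norm — rather than trying to bound $\|x_*\|_2$ directly from the KKT system with an unknown multiplier $\lambda$.
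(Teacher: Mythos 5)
Your proof is correct and follows essentially the same route as the paper: the active case is handled by the Rayleigh-quotient bound $\phi \ge \lambda_{\min}(A)\|x_*\|_2^2$, and the inactive case by deriving $(M+\rho I_N)x_*=\rho y$ (the paper gets this from the KKT system with $\lambda=0$ forced by complementarity, which coincides with your unconstrained-stationarity argument) and then bounding $\|\rho(M+\rho I_N)^{-1}\|_2\le 1$. No gaps.
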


\begin{proof}
 A local minimizer $x_*$ satisfies either $x^T_* A x_* = \phi$ or $x^T_* A x_*>\phi$. For the former case, we have $\|x\|^2_2 \le \phi/ \lambda_{\min}(A)$. For the latter, the multiplier $\lambda =0$ such that $M x_* + \rho(x_* - y)=0$. Hence, $(M + \rho I) x_* = \rho y$, leading to $x_* = (M+ \rho I)^{-1} \rho y$. This shows that $\| x^* \| \le \| (M+ \rho I)^{-1}\|_2  \cdot \rho \cdot \|y \|_2 \le \frac{\rho}{ \lambda_{\min}(M) + \rho} \| y \|_2 \le \| y \|_2$.
\end{proof}

Given $0 \ne x \in \mathbb R^N$, consider the problem
\begin{equation} \label{pr: py}
 (P_y): \qquad \min_{ y \in \mathbb R^N} \| y - x \|^2_2, \qquad \mbox{ subject to }  \qquad y^T y =1, \quad \mbox{ and } \quad \| y \|_0 \le k.
\end{equation}
To solve this problem in a closed form, let $\Jcal(x, k) \subseteq \{1, \ldots, N\}$ be the index set corresponding to the first $k$ largest elements of $x$ in absolute values.

\begin{lemma} \label{lem:sol_P_y}
 Given $ 0 \ne x \in \mathbb R^N$, let $\Jcal:=\Jcal(x, k)$. Then an optimal solution to $(P_y)$ is given by $y^* = (y^*_\Jcal, y^*_{\Jcal^c})$, where $y^*_\Jcal = \frac{ x_\Jcal}{ \|  x_\Jcal \|_2 }$, and $y^*_{\Jcal^c}=0$.
\end{lemma}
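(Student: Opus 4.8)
The plan is to reduce $(P_y)$ to a pure maximization of an inner product by exploiting the unit-norm constraint, then dispatch that maximization by Cauchy--Schwarz together with a sorting argument. First I would observe that for any feasible $y$, writing $S := \supp(y)$ so that $|S| \le k$, we have
\[
  \| y - x \|_2^2 \;=\; \| y \|_2^2 - 2\,y^T x + \| x \|_2^2 \;=\; 1 + \| x \|_2^2 - 2\, y_S^T x_S ,
\]
since $\|y\|_2=1$ and $y_{S^c}=0$. Hence minimizing $\|y-x\|_2^2$ over the feasible set is equivalent to maximizing $y_S^T x_S$ over all index sets $S$ with $|S|\le k$ and all unit vectors $y_S \in \mathbb R^{|S|}$.

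Next I would bound the inner product. For a fixed $S$, Cauchy--Schwarz gives $y_S^T x_S \le \|y_S\|_2 \|x_S\|_2 = \|x_S\|_2$, and if $x_S \neq 0$ this is attained by the choice $y_S = x_S/\|x_S\|_2$ (and if $x_S=0$ the quantity $y_S^T x_S$ is just $0$, which is no larger). It then remains to maximize $\|x_S\|_2$ over all $S$ with $|S|\le k$; since $\|x_S\|_2^2 = \sum_{i\in S} x_i^2$, this is maximized precisely by taking $S$ to consist of indices of the $k$ largest $|x_i|$, i.e.\ $S=\Jcal$ (when fewer than $k$ coordinates are nonzero, $\|x_\Jcal\|_2 = \|x\|_2$ and any admissible superset is also optimal). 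Thus the optimal value of the inner product problem is $\|x_\Jcal\|_2$, and consequently the optimal value of $(P_y)$ is $1 + \|x\|_2^2 - 2\|x_\Jcal\|_2$.

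Finally I would check that the proposed $y^*$ attains this. Because $x\neq 0$ we have $x_\Jcal \neq 0$, so $y^*_\Jcal = x_\Jcal/\|x_\Jcal\|_2$ is well defined; then $\|y^*\|_2 = \|y^*_\Jcal\|_2 = 1$ and $\|y^*\|_0 \le |\Jcal| = k$, so $y^*$ is feasible, and by construction $(y^*_\Jcal)^T x_\Jcal = \|x_\Jcal\|_2$, which meets the upper bound established above. Hence $y^*$ is optimal for $(P_y)$.

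I do not expect any serious obstacle here: the argument is essentially bookkeeping. The only points requiring a little care are (a) justifying that the support may be taken of size $\le k$ and that $y^T x = y_S^T x_S$; (b) the possibility of ties among the $|x_i|$, which makes $\Jcal$ non-unique but leaves the optimal value — and hence the validity of the stated $y^*$ for any admissible choice of $\Jcal$ — unaffected; and (c) confirming $x_\Jcal \neq 0$ so that the normalization defining $y^*$ makes sense, which is exactly where the hypothesis $x \neq 0$ is used.
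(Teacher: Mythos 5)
Your proposal is correct and follows essentially the same route as the paper: fix a candidate support, solve the restricted unit-sphere problem, observe that the optimal value is $\|x\|_2^2+1-2\|x_S\|_2$, and then maximize $\|x_S\|_2$ over supports of size $k$, which selects $S=\Jcal(x,k)$. The only difference is cosmetic: the paper solves the restricted subproblem via its KKT conditions, whereas you dispatch it more directly with Cauchy--Schwarz after expanding the square, and you are slightly more careful about the degenerate case $x_S=0$ and about ties.
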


\begin{proof}
 Note that for any $y$ satisfying $\| y \|_0 \le k$, it can be written $y_\Ical =0$ for some index set $\Ical \subseteq \{1, \ldots, N \}$ with $|\Ical| =N-k$. Hence, for any index set $\Ical$ with $|\Ical| =N-k$, $(P_y)$ can be written as $\min_{ y \in \mathbb R^N} \| y - x \|^2_2$ subject to $y^T y =1$ and $y_{\Ical}=0$, which is equivalent to $\min_{z \in \mathbb R^{|\Ical^c|} } \| z - x_{\Ical^c} \|^2_2$ subject to $z^T z =1$. Clearly, constraint qualification holds and its KKT condition for a local minimizer $z_*$ is: $z_* - x_{\Ical^c} + \mu z_* =0$ and $\|z_*\|_2=1$ for some $\mu \in \mathbb R$. This shows that $(1+\mu) z_* = x_{\Ical^c}$. Without loss of generality, we assume that $x_{\Ical^c} \ne 0$ (otherwise, $z_*$ is arbitrary on the sphere $\|z \|_2=1$). Then we must have $1+\mu \ne 0$ such that $z_* = \frac{1}{1+\mu} x_{\Ical^c}$ or equivalently $z_*$ is parallel to $x_{\Ical^c}$. Hence, it is easy to show that $z_* = \frac{x_{\Ical^c} }{\| x_{\Ical^c} \|_2}$ for any index $\Ical$ specified above. Finally, for any index $\Ical$ specified above, the optimal value is given by
 $ \|x_{\Ical^c} -  \frac{x_{\Ical^c} }{\| x_{\Ical^c} \|_2} \|^2_2 + \|x_{\Ical}\|^2_2 = (\|x_{\Ical^c} \|_2 - 1)^2 + \|x_{\Ical}\|^2_2 = \|x \|^2_2 +1 - 2 \|x_{\Ical^c}\|_2$. Consequently, the minimal value of $(P_y)$ is achieved when $\|x_{\Ical^c}\|_2$ is maximal or equivalently when $\Ical^c = \Jcal(x, k)=\Jcal$. Therefore, a minimizer $y^*$ satisfies $y^*_\Jcal = \frac{ x_\Jcal}{ \|  x_\Jcal \|_2 }$, and $y^*_{\Jcal^c}=0$.
\end{proof}

 Returning to the problem given by (\ref{eqn:saddle_point}), we see that $x_*$ is a solution to $(P_x)$ when $y=y_*$, and $y_*$ is a solution to  $(P_y)$ when $x=x_*$. (Note that such a saddle point exists.) The first order necessary conditions for a saddle point $(x_*, y_*) $ is: (i) there exists $\lambda \in \mathbb R$ such that
\[
   M x_* + \rho(x_* - y_*) -  \lambda A x_* =0, \qquad 0 \le \lambda \perp  x^T_* A x_* - \phi \ge 0,
\]
and (ii) $y_* = \mathcal T(x_*)$, where $\mathcal T$ is the operator for the closed form solution to $(P_y)$. Particularly, let $\Lcal=\Jcal(x_*, k)$. Then $(y_*)_{\Lcal} = \frac{ (x_*)_{\Lcal} }{ \| (x_*)_{\Lcal}  \|_2}$.
 Since $y_*$ satisfies $\|y_* \|_2=1$, it follows from Lemma~\ref{lem:opt_bound_P_x} that $\| x_* \|_2 \le \max\left( \sqrt{\frac{\phi}{\lambda_{\min}(A)} }, \ 1 \right)$ for any $\rho>0$. It is easy to verify that a saddle point must be a stationary point of $(P_{x, y})$ satisfying the first order optimality conditions in (\ref{eqn:KKT_P_x_y}) with $\mu = \rho(\| (x_*)_{\Lcal}  \|_2 -1 )$. 

\begin{algorithm}
\caption{Block Coordinate Decent Scheme for $(P_{x, y})$ in (\ref{eqn:P_x_y})}
\begin{algorithmic}[1]
\label{algo:BCD_scheme}
\STATE Initialization with a given constant $\rho>0$, $s=0$ and $y^s \in \mathcal Y$
\REPEAT
\STATE Compute $x^{s+1} \in \mbox{Argmin}_{x\in \mathcal X} \ q_\rho(x, y^s)$

\STATE Compute $y^{s+1} \in \mbox{Argmin}_{y \in \mathcal Y} \ q_\rho(x^{s+1}, y)$ using the operator $\mathcal T$
%
%
\STATE $s \leftarrow s+1$
\UNTIL{Stopping criterion is met}
%
%
\end{algorithmic}
\end{algorithm}

The following lemma shows that the sequence $( q_\rho(x^s, y^s))$ is either strictly decreasing or reaches an equality at a finite step, which yields a saddle point.

\begin{lemma} \label{lem:P_x_y_saddle_pt}
  Given a constant $\rho>0$, let $\big (x^s, y^s) ) $ be a numerical sequence generated by the BCD scheme in Algorithm~\ref{algo:BCD_scheme}. Then $( q_\rho(x^s, y^s))$ is non-increasing. Further, if $q_\rho(x^r, y^r) =  q_\rho(x^{r+1}, y^{r+1})$ for some $r\in \mathbb N$, then $(x^r, y^r)$ is a saddle point of $(P_{x, y})$.
\end{lemma}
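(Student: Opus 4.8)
The plan is to work directly with the two-block minimization structure of Algorithm~\ref{algo:BCD_scheme}. For the monotonicity claim, I would chain the two update steps: since $x^{s+1} \in \mbox{Argmin}_{x \in \mathcal X} q_\rho(x, y^s)$ while $x^s \in \mathcal X$, we get $q_\rho(x^{s+1}, y^s) \le q_\rho(x^s, y^s)$; and since $y^{s+1} \in \mbox{Argmin}_{y \in \mathcal Y} q_\rho(x^{s+1}, y)$ while $y^s \in \mathcal Y$, we get $q_\rho(x^{s+1}, y^{s+1}) \le q_\rho(x^{s+1}, y^s)$. Concatenating these yields the sandwich $q_\rho(x^{s+1}, y^{s+1}) \le q_\rho(x^{s+1}, y^s) \le q_\rho(x^s, y^s)$, so $(q_\rho(x^s, y^s))$ is non-increasing (the quantity being meaningful once $x^1, y^1$ are generated, i.e. for $s \ge 1$).

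Now suppose $q_\rho(x^r, y^r) = q_\rho(x^{r+1}, y^{r+1})$ for some $r \ge 1$. Applying the sandwich inequality at $s = r$, both of its inequalities must be equalities; in particular $q_\rho(x^r, y^r) = q_\rho(x^{r+1}, y^r)$. Since $x^{r+1}$ attains $\min_{x \in \mathcal X} q_\rho(x, y^r)$ by construction, and since $x^r \in \mathcal X$ (it was itself produced as an $\mathcal X$-minimizer one step earlier, so $(x^r)^T A x^r \ge \phi$), this equality says precisely that $x^r$ also attains that minimum, i.e. $x^r \in \mbox{Argmin}_{x \in \mathcal X} q_\rho(x, y^r)$. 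This is the first of the two saddle-point conditions in (\ref{eqn:saddle_point}), evaluated at $(x^r, y^r)$.

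The second saddle-point condition is, in fact, built into the algorithm and requires no use of the equality hypothesis: $y^r$ was computed as $y^r = \mathcal T(x^r)$, and since $x^r \ne 0$ (as $(x^r)^T A x^r \ge \phi > 0$), Lemma~\ref{lem:sol_P_y} guarantees that $\mathcal T(x^r)$ is an optimal solution of $(P_y)$ with input $x^r$, hence $y^r \in \mbox{Argmin}_{y \in \mathcal Y}\| x^r - y\|_2^2 = \mbox{Argmin}_{y \in \mathcal Y} q_\rho(x^r, y)$. Combining the two preceding paragraphs, $(x^r, y^r) \in \mathcal X \times \mathcal Y$ satisfies both conditions of (\ref{eqn:saddle_point}) and is therefore a saddle point of $(P_{x,y})$. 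The part of the argument to be careful with is the equality case; the small observation that makes it short is that the $y$-coordinate of every iterate is already $\mathcal Y$-optimal for its own $x$-coordinate, so the equality hypothesis only needs to be invoked on the $x$-side, and in particular no uniqueness of the $(P_x)$-minimizer is needed. One should also keep the indexing straight: $q_\rho(x^s, y^s)$ makes sense only for $s \ge 1$, which is exactly what licenses ``$x^r \in \mathcal X$'' and ``$y^r = \mathcal T(x^r)$'' above.
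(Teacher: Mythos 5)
Your argument is correct and follows essentially the same route as the paper's proof: the sandwich inequality $q_\rho(x^{s+1}, y^{s+1}) \le q_\rho(x^{s+1}, y^s) \le q_\rho(x^s, y^s)$ from the two block minimizations, and in the equality case the deduction that $x^r$ attains $\min_{x\in\mathcal X} q_\rho(x, y^r)$ while the $y$-condition holds by construction. The only difference is cosmetic: the paper cites the proof of Theorem 4.2 of the penalty decomposition reference for the sandwich inequality, whereas you derive it directly from the Argmin definitions (and you additionally note the small hypotheses $x^r \in \mathcal X$ and $x^r \ne 0$ that the paper leaves implicit).
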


\begin{proof}
 It follows from the proof for \cite[Theorem 4.2]{lu2013sparse} that $q_\rho(x^{s+1}, y^{s+1}) \le q_\rho(x^{s+1}, y^s) \le q_\rho(x^s, y^s)$ for all $s$. Hence, $( q_\rho(x^s, y^s))$ is non-increasing. Now suppose $q_\rho(x^r, y^r) =  q_\rho(x^{r+1}, y^{r+1})$ for some $r\in \mathbb N$. Then by the above result, $ q_\rho(x^{r+1}, y^r) = q_\rho(x^r, y^r)$. Furthermore, since $x^{r+1} \in \mbox{Argmin}_{x\in \mathcal X} \ q_\rho(x, y^r)$, we have $q_\rho (x^{r+1}, y^r) = \min_{x \in \mathcal X} q_\rho(x, y^r)$. In view of $ q_\rho(x^{r+1}, y^r) = q_\rho(x^r, y^r)$ and $x^r \in \mathcal X$, we have $q_\rho(x^r, y^r)=\min_{x \in \mathcal X} q_\rho(x, y^r)$ such that $x^r \in \mbox{Argmin}_{x\in \mathcal X} \ q_\rho(x, y^r)$. Further, $y^r \in \mathcal Y$ satisfies $y^r \in \mbox{Argmin}_{y \in \mathcal Y} \ q_\rho(x^{r}, y)$. This shows that $(x^r, y^r)$ is a saddle point of $(P_{x,y})$.
\end{proof}

We comment on two computational issues when running Algorithm~\ref{algo:BCD_scheme} to solve $(P_{x, y})$.

\begin{remark} \rm
In view of Lemma~\ref{lem:P_x_y_saddle_pt}, a stopping criterion for Algorithm~\ref{algo:BCD_scheme} can be based on the relative error of $q_{\rho}(x^s, y^s)$, namely,
$
\displaystyle    \frac{ q_{\rho}(x^s, y^s) -  q_{\rho}(x^{s+1}, y^{s+1}) }{ q_{\rho}(x^s, y^s)} \le \varepsilon
$
for a sufficiently small $\varepsilon>0$.
Another stopping criterion suggested in \cite{lu2013sparse} is: for a sufficiently small $\varepsilon>0$,
\[
\max
\left\{
\frac{\|x_s-x_{s-1}\|_\infty}{\max \left(\|x_s\|_\infty,1 \right)}, \
\frac{\|y_s-y_{s-1}\|_\infty}{\max \left(\|y_s\|_\infty,1 \right)}
\right\}
\le \varepsilon.
\]
\end{remark}

\begin{remark} \rm
The problem $(P_{y})$ given in Line 4 of Algorithm~\ref{algo:BCD_scheme} has a closed form solution defined by the operator $\mathcal T$ shown in Lemma~\ref{lem:sol_P_y}.
To solve the problem $(P_{x})$ in Line 3 for a given $y$, we exploit SDP  relaxation.
  Note that the   SDP  relaxation of a quadratic program with exactly one quadratic constraint obtains the same optimal value  provided that it is strictly feasible \cite{boyd2004convex}. It is easy to see (\ref{pr: px}) is strictly feasible because  $A\succ 0$. Hence, we can  find the optimal value of this nonconvex problem exactly via solving its convex  SDP  relaxation below:
\begin{equation*} 
\begin{aligned}
\min_{X\in \mathcal S^{N+1}}  \quad &
\tr\left(
\begin{bmatrix} \rho \|y\|_2^2 & -\rho y^T\\ -\rho y &  M+\rho I_N  \end{bmatrix} X
\right)
\\
\textrm{subject to} \quad
 &
 \tr\left(
 \begin{bmatrix} -\phi & 0\\ 0& A \end{bmatrix}
 X\right)\ge 0, \quad X_{11}=1,
\quad \text{and} \quad
X\succeq 0.               \\
\end{aligned}
\end{equation*}
Its dual problem is given by
\begin{equation*} 
\begin{aligned}
\max_{w_1 \in \mathbb R, w_2\in \mathbb R, Z\in \mathcal S^{N+1}}  \quad & w_2
\\
\textrm{subject to} \quad
 &
 w_1 \begin{bmatrix} -\phi & 0\\ 0& A \end{bmatrix}
 +w_2
 \begin{bmatrix} 1 & 0\\ 0& 0\end{bmatrix}+Z=
 \begin{bmatrix} \rho \|y\|_2^2 & -\rho y^T\\ -\rho y &  M+\rho I_N  \end{bmatrix},
 \quad w_1\ge 0,
\quad \text{and} \quad
Z\succeq 0.               \\
\end{aligned}
\end{equation*}
We first show that both problems are strictly feasible. Clearly, $X=\begin{bmatrix} 1 & 0\\ 0& \gamma I_N \end{bmatrix}$ with $\gamma>1/\tr(A)$ is a strictly feasible point of the primal problem (recall that $A\succ 0$ and thus $\gamma>0$). To see the dual problem is strictly feasible, it is enough to show that there exist a positive $w_1$ and an arbitrary $w_2$ such that  $Z=
 \begin{bmatrix} \rho \|y\|_2^2+w_1\phi-w_2 & -\rho y^T\\ -\rho y &  M+\rho I_N-w_1A  \end{bmatrix}\succ 0$.
 This block matrix is positive definite if and only if (i) $M+\rho I_N-w_1A\succ 0$ and (ii) $\rho \|y\|_2^2+w_1\phi-w_2 -\rho^2y^T(M+\rho I_N-w_1A)^{-1}y>0$. To guarantee the inequality (i), since $M\succ 0 $ and $\rho>0$, it is enough to choose $w_1=\epsilon>0$ small enough such that $\lambda_{\min}(M)+\rho >\epsilon \lambda_{\max}(A)$. The inequality (ii) can be easily guaranteed by choosing $w_2$ appropriately.
Since the primal and dual problems are strictly feasible, both problems obtain their solutions with the same optimal value. Let $X^*$ and $(w_1^*, w_2^*, Z^*)$ be the optimal solutions. If $X^*$ is rank-one, we trivially have the solution of (\ref{pr: px}). Otherwise, by applying the rank-one decomposition procedure in  \cite[Lemma 2.2]{ye2003new}, we get
$X^*=\sum_{i=1}^r u_iu_i^T$ with $r=\rank(X^*)$, $0 \ne u_i \in \mathbb R^{N+1}$, for all $i\in [r]$ such that
\begin{equation*}
    ru_i^T\begin{bmatrix} -\phi & 0\\ 0& A \end{bmatrix}u_i=\tr\left(
    \begin{bmatrix} -\phi & 0\\ 0& A \end{bmatrix}X^*
    \right)
    \ge 0, \quad \forall \, i\in [r].
\end{equation*}
Since $X^*_{11}=1$, there exists $j\in [r]$ such that $u_j=[\alpha;u] \in \mathbb R^{N+1}$ such that $\alpha\ne 0$. Further, the KKT conditions imply that $0=\tr\left(X^*Z^*\right)=\tr\left(\sum_{i=1}^ru_iu_i^TZ^*\right)=\sum_{i=1}^r\tr\left(u^T_iZ^*u_i\right)$ and since $Z^*\succeq 0$, we have $u_j^TZ^*u_j=0$. Thus, $u_ju_j^T$ and $(w_1^*, w_2^*, Z^*)$  satisfy the KKT conditions and consequently, $u/\alpha$ yields a solution to (\ref{pr: px}). For more details, see \cite[Section 2.1]{ye2003new}.
\end{remark}



The penalty decomposition scheme developed in \cite{lu2013sparse} is applied to compute a stationary point of $(P)$ (or $P'$); see Algorithm~\ref{algo:PD_scheme}. The inner loop of  Algorithm~\ref{algo:PD_scheme}, which is as same as  Algorithm~\ref{algo:BCD_scheme}, intends to solve $(P_{x, y})$ for a given $\rho_j>0$.

%
%

\begin{algorithm}
\caption{Penalty Decomposition Scheme for $(P)$ in (\ref{pr: p}) }
\begin{algorithmic}[1]
\label{algo:PD_scheme}
\STATE Initialization with constants $r>1$ and $\rho_0>0$, set $j=0$, and choose $y^{0, 0}\in \mathcal Y$
\REPEAT

 \STATE Set $s=0$

 \REPEAT

   \STATE Compute $x^{j, s+1} \in \mbox{Argmin}_{x\in \mathcal X} \ q_{\rho_j}(x, y^{j, s})$

    \STATE Compute $y^{j, s+1} \in \mbox{Argmin}_{y \in \mathcal Y} \ q_{\rho_j}(x^{j, s+1}, y)$ using the operator $\mathcal T$

  \STATE $s \leftarrow s+1$

 \UNTIL{Stopping criterion is met}

%
%
\STATE  $\rho_{j+1} = \rho_j \cdot r$, and $y^{j+1, 0} = y^{j, s}$
\STATE $j \leftarrow j+1$
\UNTIL{Stopping criterion is met}
%
%
\end{algorithmic}
\end{algorithm}

In what follows, we show that a numerical sequence of Algorithm~\ref{algo:PD_scheme} attains a convergent subsequence with the limit $(x^*, y^*)$ and that under a mild assumption on $x^*$, $x^*$ is a KKT point of a local minimizer of $(P)$ in (\ref{pr: p}) or $(P')$ for a suitable index set $\Lcal$ with $x^*_\Lcal=0$. The proof this result mostly follows from that of \cite[Theorem 4.3]{lu2013sparse}. To be self contained, we present it here with emphasis on some (minor) differences.

%
%

\begin{proposition}
 Let  $\big ((x^j, y^j) \big)$  be a sequence generated by Algorithm~\ref{algo:PD_scheme}. The following hold:
 \begin{itemize}
   \item [(i)] $\big( ((x^j, y^j) \big)$ has a convergent subsequence whose limit is given by $(x^*, y^*)$ satisfying $x^*=y^*$, and there exists an index subset $\Lcal$ with $|\Lcal|=k$ such that $x^*_{\Lcal^c}=0$.
   \item [(ii)] Suppose the Robinson condition given before Lemma~\ref{lem:Robison_condition_ii} holds at $x^*$ with the index subset $\Lcal$ indicated above. Then $x^*$ is a KKT point satisfying (\ref{eqn:KKT_conditions}) for $(P')$ with $\Ical=\Lcal$.
 \end{itemize}
\end{proposition}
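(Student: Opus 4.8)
The plan is to follow the blueprint of the convergence analysis of the penalty decomposition method in \cite{lu2013sparse} (its Theorem~4.3), substituting the structural facts specific to our two blocks: the uniform bound on a minimizer of $(P_x)$ from Lemma~\ref{lem:opt_bound_P_x}, the closed-form solution of $(P_y)$ (the operator $\mathcal T$) from Lemma~\ref{lem:sol_P_y}, and the characterization of Robinson's condition from Lemma~\ref{lem:Robison_condition_ii}. Part (i) is a boundedness-plus-penalty-forcing argument; part (ii) is a limiting-KKT argument whose crux is the boundedness of the multiplier sequences, which is exactly where the Robinson condition enters.

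\emph{Part (i).} Every inner-loop iterate lies in $\mathcal Y$, so $\|y^j\|_2=1$; then Lemma~\ref{lem:opt_bound_P_x} gives $\|x^j\|_2\le\max\big(\sqrt{\phi/\lambda_{\min}(A)},\,1\big)$ uniformly in $j$. Hence $\big((x^j,y^j)\big)$ is bounded and admits a convergent subsequence with limit $(x^*,y^*)$. Since the operator $\mathcal T$ of Lemma~\ref{lem:sol_P_y} selects the index set $\Jcal(x^j,k)$ from the finite family of $k$-subsets of $[N]$, we may refine the subsequence so that $\Jcal(x^j,k)\equiv\Lcal$ along it; then $(y^j)_{\Lcal^c}=0$, so $y^*_{\Lcal^c}=0$ and $\|y^*\|_2=1$. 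To obtain $x^*=y^*$ we use the penalty-forcing device of \cite{lu2013sparse}: taking a feasible point $\bar x$ of $(P)$ as a reference, together with the customary safeguard on the outer-loop initialization that keeps $q_{\rho_j}(x^j,y^j)\le\bar x^T M\bar x$, and using $(x^j)^T M x^j\ge 0$ while $\rho_j\to\infty$, we get $\|x^j-y^j\|_2^2\le q_{\rho_j}(x^j,y^j)/\rho_j\to 0$; passing to the limit gives $x^*=y^*$, hence $x^*_{\Lcal^c}=0$.

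\emph{Part (ii).} Along the subsequence above, the terminal inner-loop iterate $(x^j,y^j)$ — a saddle point of $(P_{x,y})$ for $\rho=\rho_j$ when the inner loop is solved to optimality (cf.\ Lemma~\ref{lem:P_x_y_saddle_pt}; the tolerance issue is addressed at the end) — satisfies (\ref{eqn:KKT_P_x_y}) with some $\lambda_j\ge 0$ and $\mu_j=\rho_j\big(\|(x^j)_\Lcal\|_2-1\big)$. Splitting the stationarity equation $Mx^j-\lambda_j Ax^j+\rho_j(x^j-y^j)=0$ over $\Lcal$ and $\Lcal^c$ and using $(y^j)_\Lcal=(x^j)_\Lcal/\|(x^j)_\Lcal\|_2$ and $(y^j)_{\Lcal^c}=0$, the $\Lcal$-block reads $(Mx^j)_\Lcal-\lambda_j(Ax^j)_\Lcal+\mu_j(y^j)_\Lcal=0$. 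The key step is to show $(\lambda_j,\mu_j)$ is bounded. If $(x^*)^T A x^*>\phi$, then $(x^j)^T A x^j>\phi$ for all large $j$, so $\lambda_j=0$ and $|\mu_j|=\|(Mx^j)_\Lcal\|_2\le\|M\|_2\|x^j\|_2$ is bounded. If $(x^*)^T A x^*=\phi$, suppose $(\lambda_j,\mu_j)$ were unbounded; passing to a further subsequence, $c_j:=\sqrt{\lambda_j^2+\mu_j^2}\to\infty$ and $(\lambda_j/c_j,\mu_j/c_j)\to(\hat\lambda,\hat\mu)$ with $\hat\lambda\ge 0$, $\hat\lambda^2+\hat\mu^2=1$; dividing the $\Lcal$-block by $c_j$ and letting $j\to\infty$ (using $(x^j)_\Lcal\to x^*_\Lcal$ and $\|(x^j)_\Lcal\|_2\to\|x^*\|_2=1$) gives $\hat\lambda(Ax^*)_\Lcal-\hat\mu\, x^*_\Lcal=0$; since, by Lemma~\ref{lem:Robison_condition_ii}, Robinson's condition at $x^*$ with $\Lcal$ is equivalent to linear independence of $\{(Ax^*)_\Lcal,\,x^*_\Lcal\}$, this forces $\hat\lambda=\hat\mu=0$, contradicting $\hat\lambda^2+\hat\mu^2=1$. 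Thus $(\lambda_j,\mu_j)$ is bounded; passing to a further subsequence, $(\lambda_j,\mu_j)\to(\lambda,\mu)$ with $\lambda\ge 0$, and the $\Lcal$-block yields $(Mx^*)_\Lcal-\lambda(Ax^*)_\Lcal+\mu\, x^*_\Lcal=0$. Setting $w_\Lcal:=0$ and $w_{\Lcal^c}:=\lambda(Ax^*)_{\Lcal^c}-(Mx^*)_{\Lcal^c}$, and recalling $x^*_{\Lcal^c}=0$, we obtain $Mx^*-\lambda Ax^*+\mu x^*+w=0$ with $w_\Lcal=0$; the complementarity $0\le\lambda\perp(x^*)^T A x^*-\phi\ge 0$ follows from the corresponding relation for $(x^j,\lambda_j)$ in the limit, and $\|x^*\|_2=\|y^*\|_2=1$. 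This is precisely (\ref{eqn:KKT_conditions}) for $(P')$ with $\Ical=\Lcal$.

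The main obstacle is the penalty-forcing step in part (i): controlling $q_{\rho_j}(x^j,y^j)$ as $\rho_j\to\infty$, which is where we must track the safeguard built into the outer loop of \cite{lu2013sparse} and which accounts for the ``(minor) difference'' mentioned above. The second delicate point is the multiplier-boundedness argument in part (ii): the Robinson condition is indispensable there, since without it the normalized multipliers could converge to a nonzero pair $(\hat\lambda,\hat\mu)$ with $\hat\lambda(Ax^*)_\Lcal=\hat\mu\, x^*_\Lcal$. A minor technicality is that each inner loop is only run to a stopping tolerance, so one should either assume it is solved to a saddle point or let the inner tolerances decrease to zero so that $(x^j,y^j)$ satisfies (\ref{eqn:KKT_P_x_y}) with errors vanishing as $j\to\infty$, which does not affect any of the limiting identities above.
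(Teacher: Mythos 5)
Your proof is correct and follows the same overall architecture as the paper's: boundedness of the iterates via Lemma~\ref{lem:opt_bound_P_x} plus a pigeonhole argument on the finitely many candidate supports for part (i), and a normalize-the-multipliers contradiction argument driven by the Robinson condition for part (ii). Two modest but genuine points of divergence are worth recording. First, in part (ii) the paper normalizes the full triple $(\lambda_{j}, \mu_{j}, w^{j})$ --- including the multiplier $w^{j}_{\Lcal^c}=\rho_{j}(x^{j}-y^{j})_{\Lcal^c}$ for the constraint $x_{\Lcal^c}=0$, which is a priori unbounded since $\rho_j\to\infty$ --- and invokes the raw set-form of Robinson's condition (choosing $d$ with $d_{\Lcal^c}$ matched to the normalized limit of $w^{j}_{\Lcal^c}$) to reach the contradiction. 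You instead work only with the $\Lcal$-block of the stationarity equation, bound just the pair $(\lambda_j,\mu_j)$ via the linear-independence characterization of Lemma~\ref{lem:Robison_condition_ii}, and recover $w_{\Lcal^c}$ post hoc from the limiting equation; since $w_{\Lcal^c}$ is unconstrained in (\ref{eqn:KKT_conditions}), this is legitimate and slightly leaner, as it avoids having to prove boundedness of $w^{j}$ at all. Second, for $x^*=y^*$ in part (i) the paper defers silently to the argument of Lu and Zhang, whereas you correctly flag that the penalty-forcing bound $\|x^j-y^j\|_2^2\le q_{\rho_j}(x^j,y^j)/\rho_j\to 0$ requires a uniform bound on $q_{\rho_j}(x^j,y^j)$, which in turn relies on the feasible-restart safeguard of that scheme that Algorithm~\ref{algo:PD_scheme} as written omits; this is an honest accounting of the ``minor difference'' the paper alludes to rather than a gap in your argument.
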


\begin{proof}
(i) For each $j$, $(x^j, y^j)$ is a saddle point of $(P_{x, y})$ in (\ref{eqn:P_x_y}) with $\rho=\rho_j>0$, namely,
 $x^j \in \mbox{Argmin}_{x\in \mathcal X} \ q_{\rho_j}(x, y^j)$ and  $y^j \in \mbox{Argmin}_{y\in \mathcal Y} \ q_{\rho_j}(x^j, y)$. By the definition of $\mathcal Y$, $\| y^j \|_2 = 1$ for all $j$. Further, it follows from Lemma~\ref{lem:opt_bound_P_x} that
  $\| x^j \|_2 \le \max\left( \sqrt{\frac{\phi}{\lambda_{\min}(A)} }, \ 1 \right)$ for any $\rho_j>0$. Thus  $\big( ((x^j, y^j) \big)$ is bounded and thus attains a convergent subsequence whose limit is denoted by $(x^*, y^*)$. By the similar argument in \cite[Theorem 4.3]{lu2013sparse}, we see that $x^*=y^*$.
   Let $\Ical^j$ be an index subset of $[N]$ with $|\Ical^j|=k$ and $y^j_{(\Ical^j)^c}=0$ for each $j$. Then there exist an index subset $\Lcal$ with $|\Lcal|=k$ and a subsequence $\big( (x^{j_\ell}, y^{j_\ell}) \big)$ of the above convergent subsequence such that $\Ical^{j_\ell} = \Lcal$ for all large $j_\ell$. Therefore, $x^*_{\Lcal^c}=0$.

(ii)  Using the above mentioned subsequence, we assume without loss of generality that $\big( (x^{j_\ell}, y^{j_\ell} ) \big)$ converges to $(x^*, y^*)$ and $y^{j_\ell}_{\Lcal^c}=0$ for all $i_\ell$. Since each $(x^{j_\ell}, y^{j_\ell})$ is a saddle point of $(P_{x, y})$ with $\rho=\rho_{j_\ell}>0$, it satisfies
 \[
   M x^{j_\ell} + \rho_{j_\ell} (x^{j_\ell} - y^{j_\ell}) -  \lambda_{j_\ell} A x^{j_\ell} =0, \qquad 0 \le \lambda_{j_\ell} \perp  (x^{j_\ell})^T A x^{j_\ell} - \phi \ge 0,
 \]
and $y^{j_\ell} = \mathcal T(x^{j_\ell})$, where $\mathcal T$ is the operator for the closed form solution to $(P_y)$ such that $y^{j_\ell}_{\Lcal} = \frac{ (x^{j_\ell})_{\Lcal} }{ \| (x^{j_\ell})_{\Lcal}  \|_2}$ and $y^{j_\ell}_{\Lcal^c}=0$. Letting $\mu_{j_\ell}:= \rho_{j_\ell} (\|x^{j_\ell}_{\Lcal}\|_2 - 1)$, it is easy to verify that $\rho_{j_\ell} (x^{j_\ell} - y^{j_\ell})_{\Lcal} = \mu_{j_\ell} y^{j_\ell}_{\Lcal}$ for all ${j_\ell}$. Hence, we write the above optimality condition as
\[
  0 =  M x^{j_\ell} -\lambda_{j_\ell} A x^{j_\ell} +  \begin{bmatrix} \rho_{j_\ell} (x^{j_\ell} - y^{j_\ell})_{\Lcal} \\ 0 \end{bmatrix} + \underbrace{\begin{bmatrix} 0 \\ \rho_{j_\ell} (x^{j_\ell} - y^{j_\ell})_{\Lcal^c}  \end{bmatrix} }_{:=w^{j_\ell}}
  =  M x^{j_\ell} -\lambda_{j_\ell} A x^{j_\ell} +  \mu_{j_\ell} \begin{bmatrix}  y^{j_\ell}_{\Lcal} \\ 0 \end{bmatrix} + w^{j_\ell},
\]
where $w^{j_\ell}_{\Lcal} =0$ for all ${j_\ell}$, and $0 \le \lambda_{j_\ell} \perp  (x^{j_\ell})^T A x^{j_\ell} - \phi \ge 0$.

We claim that the sequence $\big( (\lambda_{j_\ell}, \mu_{j_\ell}, w^{j_\ell}) \big)$ is bounded under the Robinson condition at $x^*$. Suppose not, consider the normalized sequence
\[
( \wt \lambda_{j_\ell}, \wt \mu_{j_\ell}, \wt w^{j_\ell}):= \frac{ (\lambda_{j_\ell}, \mu_{j_\ell}, w^{j_\ell})} { \| (\lambda_{j_\ell}, \mu_{j_\ell}, w^{j_\ell}) \|_2}, \qquad \forall \ {j_\ell}.
\]
 Then by working on a suitable convergent subsequence of $( \wt \lambda_{j_\ell}, \wt \mu_{j_\ell}, \wt w^{j_\ell})$ whose limit is given by $(\wt \lambda_*, \wt \mu_*, \wt w^*)$ with $\| (\wt \lambda_*, \wt \mu_*, \wt w^*) \|_2=1$, we obtain, in view of $x^*=y^*$, $y^*_{\Lcal^c}=0$ and the boundedness of $(M x^{j_\ell})$, and  passing the limits, that
\begin{equation} \label{eqn:limit_condition}
   -\wt\lambda_*  A x^* + \wt \mu_* x^* + \wt w^* =0,
\end{equation}
where $\wt \lambda_* \ge 0$, $x^*_{\Lcal^c} =0$, and $\wt w^*_\Lcal=0$. Consider (a): $(x^*)^T A x^* = \phi$, and (b): $(x^*)^T A x^* > \phi$ as follows.

(a) Consider $(x^*)^T A x^* = \phi$ first. By the Robinson's conditions at $x^*$ with the index set $\Ical=\Lcal$, we see that there exist a vector $d \in \mathbb R^N$ and a constant $v\in \mathbb R_-$ such that $-2 d^T A x^* - v = -2\wt \lambda_*$, $2 d^T x^* = - 2\wt \mu_*$, and $d_{\Lcal^c} = - \wt w^*_{\Lcal^c}$. Since $d_{\Lcal^c} = - \wt w^*_{\Lcal^c}$ and $\wt w^*_\Lcal=0$, we see that $d^T \wt w^* = -\| \wt w^* \|^2_2$. Therefore,
\[
  0 = -\wt \lambda_* d^T A x^* + \wt \mu d^T x^* + d^T \wt w^* =  - (\wt \lambda_*)^2 + \frac{\wt \lambda_* v }{2} - (\wt \mu_*)^2 - \|\wt w^* \|^2_2 = - \|  (\wt \lambda_*, \wt \mu_*, \wt w^*) \|^2_2 +  \frac{\wt \lambda_* v }{2}.
\]
This shows that $\|  (\wt \lambda_*, \wt \mu_*, \wt w^*) \|^2_2 =  \frac{\wt \lambda_* v }{2}$.
Since $\wt \lambda_* \ge 0$ and $v \le 0$, we have $\|  (\wt \lambda_*, \wt \mu_*, \wt w^*) \|^2_2=0$, contradiction. Therefore, the sequence $\big( (\lambda_{j_\ell}, \mu_{j_\ell}, w^{j_\ell}) \big)$ is bounded.

(b) Suppose $(x^*)^T A x^* > \phi$. In this case,  since $(x^{j_\ell})$ converges to $x^*$, we have  $(x^{j_\ell})^T A x^{j_\ell} > \phi$ for all $j_\ell$ sufficiently large. Hence, $\lambda_{j_\ell} =0$ for all large $j_\ell$. This shows that $\wt \lambda_*=0$.
By the Robinson condition at $x^*$, there exist a vector $d \in \mathbb R^N$ and a constant $v\in \mathbb R$ such that $-2 d^T A x^* - v = -2\wt \lambda_*$, $2 d^T x^* = - 2\wt \mu_*$, and $d_{\Lcal^c} = - \wt w^*_{\Lcal^c}$.  Applying these results to (\ref{eqn:limit_condition}) and using $\wt \lambda_*=0$, we obtain via a similar argument for case (a) that
$0 = -\wt \lambda_* d^T A x^* + \wt \mu d^T x^* + d^T \wt w^* =- \|  (\wt \lambda_*, \wt \mu_*, \wt w^*) \|^2_2 +  \frac{\wt \lambda_* v }{2} = \|  (\wt \lambda_*, \wt \mu_*, \wt w^*) \|^2_2$,  contradiction.
%
%

Consequently, the sequence $\big( (\lambda_{j_\ell}, \mu_{j_\ell}, w^{j_\ell}) \big)$ is bounded and has a convergent subsequence whose limit is given by $(\lambda_*, \mu_*, w^*)$ with $\lambda_* \ge 0$ and $w^*_\Lcal=0$. By passing the limit along this subsequence, we obtain
\[
  M x^* - \lambda_* A x^* + \mu_* x^* + w^* = 0, \quad 0\le \lambda_* \perp (x^*)^T A x^* - \phi \ge 0, \quad \|x^*\|_2=1, \quad w^*_{\Lcal}=0, \quad  x^*_{\Lcal^c}=0.
\]
%
%
Therefore, $x^*$, along with $(\lambda_*, \mu_*, w^*)$, is a KKT point for $(P)$ satisfying (\ref{eqn:KKT_conditions}) with $\Ical=\Lcal$.
\end{proof}

%
\section{Stage Two: Greedy Algorithm for Improvement} \label{sec: greedy_stage}

A greedy algorithm is proposed to improve  the result obtained from the penalty decomposition scheme (i.e., Algorithm~\ref{algo:PD_scheme}) in the second stage.
%
%
To describe this greedy algorithm, we introduce the following problem. Given an index set $\Ical \subset\{1, \ldots, N\}$, consider the restricted QCQP problem of (\ref{pr: p}):
\begin{equation} \label{eqn:P_I}
(P_{\Ical}): \quad  \min_{x \in \mathbb{R}^N} \ x^T M  x\qquad \mbox{subject to}  \qquad  x^T A x\ge \phi, \quad  x^T x=1, \quad \text{and}\quad x_{\Ical^c}=0.
\end{equation}
Using this subproblem, the proposed greedy algorithm is presented as follows.

\begin{algorithm}[H]
\caption{Greedy Algorithm}
\begin{algorithmic}[1]
\label{algo:greedy_scheme}
\STATE Input: $M \text{ and } A \in \mathbb R^{N\times N}$, $\phi >0$, $k\in \mathbb{N}$, $s=2$, and the index set $\Lcal$ with $|\Lcal|=k$ obtained from Algorithm~\ref{algo:PD_scheme}

\STATE Initialization: set $n=0$, and $S^{(n)}=\Lcal$

\REPEAT

\STATE  Find $\Jcal^{(n)} \in \Argmin_{\Jcal\subseteq [S^{(n)} ]^c, \ |\Jcal|=s}$ $(P_{S^{(n)}\cup \Jcal})$, where each $(P_{S^{(n)}\cup \Jcal})$ is solved  via Algorithm~\ref{algo:solve_P_I}

%
\STATE Find $S^{(n+1)} \in \Argmin_{\mathcal K \subset [ S^{(n)} \cup \Jcal^{(n)}], \ |\mathcal K|=k } (P_{\mathcal K})$, where $(P_{\mathcal K})$ is solved via Algorithm~\ref{algo:solve_P_I}

\STATE $n \leftarrow n+1$

\UNTIL $S^{(n)}=S^{(n-1)}$

\STATE Output: an optimal solution $x^*$ to $(P_{  S^{(n)} })$.


\end{algorithmic}
\end{algorithm}

The proposed greedy algorithm improves an underlying index set $S^{(n)}$ and its associated solution $x^{(n)}$ in each iteration. We describe its key ideas and steps as follows. For each $n$th iterate, the algorithm first selects two additional best local indices by solving ${N-k \choose 2}$ restricted QCQPs on  $\mathbb R^{(k+2)}$ (cf. Line 4). It then chooses an optimal index subset $S^{(n+1)}$ of size $k$ from the union of $S^{(n)}$ and the two additional indices which achieves a minimal value among all the $\frac{k(k+1)}{2}$ restricted QCQPs on  $\mathbb R^k$.
The greedy algorithm improves the solution of the PD scheme as shown below.

\begin{lemma}
 Let $f(x) = x^T M x$ be the objective function of $(P)$ given in (\ref{pr: p}), and  $\big(x^{(n)}\big)$ be a sequence generated by Algorithm~\ref{algo:greedy_scheme}, where $x^{(n)}$ is the solution of $P_{S^{(n)}}$. Then each $x^{(n)}$ is feasible to $(P)$, and $\big ( f( x^{(n)} ) \big)$ is decreasing.
\end{lemma}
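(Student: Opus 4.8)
The plan is to verify the two assertions separately: feasibility of each $x^{(n)}$ to $(P)$, and monotone decrease of $f(x^{(n)})$. For feasibility, I would argue that $x^{(n)}$ is by construction an optimal solution of the restricted problem $(P_{S^{(n)}})$ in (\ref{eqn:P_I}), hence satisfies $x^T A x \ge \phi$, $x^T x = 1$, and $x_{(S^{(n)})^c} = 0$. Since $|S^{(n)}| = k$ for all $n$ (the algorithm only ever selects index sets of size exactly $k$ in Line 5, and the initial set $\Lcal$ has $|\Lcal| = k$), the support condition $x_{(S^{(n)})^c} = 0$ forces $\|x^{(n)}\|_0 \le k$. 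Therefore $x^{(n)}$ satisfies all three constraints of $(P)$ in (\ref{pr: p}), i.e., it is feasible. One should also note, for this to make sense, that each $(P_{S^{(n)}})$ is itself feasible so that an optimal solution exists; this needs a brief remark (see below).

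For the monotonicity of $\big(f(x^{(n)})\big)$, the key observation is a nesting/containment argument on the feasible sets of the restricted problems. At iteration $n$, the set $S^{(n+1)}$ is chosen from subsets of $S^{(n)} \cup \Jcal^{(n)}$ of size $k$. In particular, $S^{(n)}$ itself is one such candidate subset (since $S^{(n)} \subseteq S^{(n)} \cup \Jcal^{(n)}$ and $|S^{(n)}| = k$). Hence, by the $\Argmin$ in Line 5,
\begin{equation*}
  f(x^{(n+1)}) = \text{optval}(P_{S^{(n+1)}}) \le \text{optval}(P_{S^{(n)}}) = f(x^{(n)}).
\end{equation*}
This gives $f(x^{(n+1)}) \le f(x^{(n)})$ for every $n$, which is the claimed (non-strict) decrease; if one wants strict decrease, it follows along the subsequence where $S^{(n+1)} \ne S^{(n)}$, i.e., until the algorithm's stopping criterion $S^{(n)} = S^{(n-1)}$ is met. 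I would phrase the statement as ``non-increasing'' to be safe, matching the wording of Lemma~\ref{lem:P_x_y_saddle_pt}.

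The main obstacle — really the only subtlety — is ensuring that all the restricted problems appearing in Lines 4 and 5 are feasible, so that the $\Argmin$'s are over nonempty sets and the solutions $x^{(n)}$ actually exist. The cleanest way is an inductive argument: $S^{(0)} = \Lcal$ is the support-type index set of the stationary point $x^*$ produced by Stage 1, and $x^*$ is feasible to $(P_{\Lcal})$, so $(P_{S^{(0)}})$ is feasible. Given that $(P_{S^{(n)}})$ is feasible with an optimal solution $x^{(n)}$, enlarging the index set to $S^{(n)} \cup \Jcal$ only enlarges the feasible set, so every $(P_{S^{(n)} \cup \Jcal})$ in Line 4 is feasible; and in Line 5, $S^{(n)}$ is among the candidate sets $\mathcal K$, so the collection of feasible $(P_{\mathcal K})$ is nonempty and the $\Argmin$ selects one with finite optimal value, giving a feasible $(P_{S^{(n+1)}})$ and hence a well-defined $x^{(n+1)}$. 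I would record this feasibility chain as a short paragraph before the two-line inequality above, since it is what makes both claims of the lemma meaningful; the rest is immediate from the definition of $\Argmin$ and the containment $S^{(n)} \subseteq S^{(n)} \cup \Jcal^{(n)}$.
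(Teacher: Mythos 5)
Your argument is correct and follows essentially the same route as the paper's proof: feasibility comes from $|S^{(n)}|=k$ together with the constraints of $(P_{S^{(n)}})$, and monotonicity comes from $S^{(n)}$ being a size-$k$ candidate in the Line~5 $\Argmin$ over subsets of $S^{(n)}\cup\Jcal^{(n)}$, giving $f(x^{(n+1)})\le f(x^{(n)})$. Your added remarks on the well-definedness of the restricted subproblems and on ``non-increasing'' versus ``decreasing'' are sensible refinements that the paper glosses over, but the core argument is identical.
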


\begin{proof}
 Note that for each $n$, $|S^{(n)}|=k$ and $ x^{(n)}_i=0$ for all $i \notin  S^{(n)}$. Clearly, $x^{(n)}$ is a feasible point of $(P)$. Furthermore, for the index set $\Jcal^{(n)}$ obtained from Line 4, $S^{(n)}$ is a subset of $S^{(n)} \cup \Jcal^{(n)}$ with $|S^{(n)}|=k$. By  Line 5 of Algorithm~\ref{algo:greedy_scheme}, we see that $f( x^{(n+1)}) \le f( x^{(n)})$ and $\big ( f( x^{(n)} ) \big)$ is decreasing.
\end{proof}


We discuss an algorithm solving the subproblem (\ref{eqn:P_I}) for a given index set $\Ical$ with $|\Ical|=\ell$. Define
\begin{equation} \label{eq: q0_q1}
    Q_0 := M_{\Ical \Ical} \quad \text{and} \quad Q_1 :=-\frac{A_{\Ical \Ical}}{\phi}.
\end{equation}
Since  $A$ and $M$ are both positive definite,  $Q_0\succ 0\succ Q_1$. Clearly, $(P_\Ical)$ can be reduced to the following nonconvex QCQP:
\begin{equation} \label{pr: upsilon_step_update}
 (P_\Upsilon): \qquad \min_{\Upsilon \in \mathbb{R}^{|\Ical|} } \ \Upsilon^TQ_0\Upsilon \qquad \mbox{subject to}  \qquad  \Upsilon^TQ_1\Upsilon\le  -1, \quad \text{and}\quad  \Upsilon^T\Upsilon=1.
\end{equation}
To solve $(P_{\Upsilon})$, consider its    SDP  relaxation:
\begin{equation}  \label{pr: sdp-relax-py}
(P_Y):  \qquad \min_{Y\in \mathcal S^{|\Ical|}}  \quad
\tr\left(Q_0Y\right)             \qquad
\textrm{subject to} \qquad
 \tr\left(Q_1Y\right)\le -1,\quad  \tr(Y)=1, \quad \text{and} \quad Y\succeq 0.
\end{equation}
The reason for considering  (\ref{pr: sdp-relax-py}) instead of (\ref{pr: upsilon_step_update}) is threefold. First, it is known that the nonconvex problem (\ref{pr: upsilon_step_update}) and its convex  SDP  relaxation (\ref{pr: sdp-relax-py}) have the same optimal value. Second, given a solution of (\ref{pr: sdp-relax-py}), a rank-one  solution of (\ref{pr: sdp-relax-py}), which  can be constructed in a polynomial time, leading to a global  solution of  (\ref{pr: upsilon_step_update}). Lastly, since $|\Ical|$ is usually small, leveraging   SDP  relaxation is computationally efficient.

Let $Y^*$ be a solution of (\ref{pr: sdp-relax-py}). Consider two cases:  $\tr(Q_1Y^*)<-1$, and $\tr(Q_1Y^*)=1$. For the former case, it is easy to see that the optimal value is $\lambda_{\min}(Q_0)$, and $Y^*=vv^T$, where $v$ is the unit eigenvector associated with $\lambda_{\min}(Q_0)$.
For the latter case, the problem becomes
\begin{equation}  \label{pr: sdp-relax-py-equality}
\min_{Y\in \mathcal S^{|\Ical|}}  \quad
\tr\left(Q_0Y\right)             \qquad
\textrm{subject to} \qquad
 \tr\left(Q_1Y\right)= -1,\quad  \tr(Y)=1, \quad \text{and} \quad Y\succeq 0.
\end{equation}
Its dual problem is:
\begin{equation*}  \label{pr: dual-sdp-relax-py}
\max_{y=(y_1, y_2)\in \mathbb R^2} \quad -y_1+y_2
   \qquad
\textrm{subject to} \qquad
Z=Q_0-y_1Q_1-y_2I_k,\quad \text{and} \quad Z\succeq 0,
\end{equation*}
which clearly has a strictly feasible point because $Q_0\succ 0$. Hence, the dual problem satisfies the Slater's condition such that the strong duality holds, i.e., the primal and dual problems have the same optimal value.
%
%
Furthermore,
 it is known that the standard SDP with $m$ constraints has  a solution with rank $r$ such that $r(r+1)\le 2 m$ \cite{barvinok1995problems,pataki1998rank}. We immediately deduce  that (\ref{pr: sdp-relax-py-equality}), if feasible, has a rank-one solution. 

There are various standard methods to construct a solution of a QCQP from its  SDP  relaxation in a polynomial time
 \cite{ai2011new,ai2009strong, sturm2003cones}. However,  we avoid describing  them here and instead  discuss a rank reduction  algorithm for this  SDP \cite{lemon2016low}, which is more appropriate for practical purposes. The following algorithm starts from an arbitrary solution of (\ref{pr: sdp-relax-py-equality}) and returns another solution with rank one.

\begin{algorithm}[H]
\caption{Solution Rank Reduction for (\ref{pr: sdp-relax-py-equality})}
\begin{algorithmic}[1]
\label{algo: rank-reduction}
%
\STATE Input: A solution $Y$ of (\ref{pr: sdp-relax-py-equality}). Let $r=\rank(Y)$.

\WHILE{$r>1$}
\STATE
Compute a factorization $Y=VV^T$ with $V\in \mathbb R^{N\times r}$
\STATE Find a nonzero $\Delta \in \mathcal S^r$ with $\tr(\Delta)=0$ and  $\tr(V^TQ_1V\Delta)=0$
\STATE Let $Y=V
\left(I_r-{\lambda^{-1}_{\max}(\Delta)}\Delta\right)V^T$
\STATE Let $r=\rank(Y)$
\ENDWHILE

\STATE Output: A rank-one solution of (\ref{pr: sdp-relax-py-equality}).

\end{algorithmic}
\end{algorithm}

 We are now ready to  summarize our procedure for solving (\ref{eqn:P_I}) in the following algorithm.

\begin{algorithm}[H]
\caption{Solve Problem~$(P_\Ical)$ given by (\ref{eqn:P_I})}
\begin{algorithmic}[1]
\label{algo:solve_P_I}
%
\STATE Input: $Q_0 \text{ and } Q_1$ defined in (\ref{eq: q0_q1}).

\STATE
Compute $\lambda_{\min}(Q_0)$ and its associated unit eigenvector $v$.
\STATE Solve (\ref{pr: sdp-relax-py-equality}) to get a solution $X$.
\STATE Apply Algorithm \ref{algo: rank-reduction} to obtain a rank-one solution $X=uu^T$ of (\ref{pr: sdp-relax-py-equality})
\IF{  $v^T Q_1v<1$ and $\lambda_{\min}(Q_0)< u^TQ_0u$ }
\STATE $\Upsilon=v$
\ELSE
\STATE $ \Upsilon=u$
\ENDIF

\STATE Output: $x_\Ical=\Upsilon$, and $x_{\Ical^c}=0$

\end{algorithmic}
\end{algorithm}

%

\section{Numerical Experiments} \label{sec: numerical_experiments}

A standard (statistical) arbitrage strategy has the following four main steps: constructing an initial appropriate asset pool, designing a mean-reverting portfolio, applying a unit-root test to verify the mean-reverting property, and finally, trading such a favorable portfolio.  For the first step, a method for constructing an asset pool is explained in \cite{cuturi2016mean}, which resorts to the smallest eigenvalue of the corresponding covariance matrix.  In this paper, we focus on the very critical (second) step of an arbitrage strategy, namely, proposing a mean-reverting portfolio through minimizing the predictability surrogate. Additionally, we include the sparsity and volatility constraints in (\ref{pr: p}) to ensure the profitability of such a portfolio; when utilized in practice. For the third step, it is well-known how to employ a unit-root test such as Dickey-Fuller \cite{dickey1979distribution}. For the forth step, we avoid describing a regular mean-reversion trading strategy and instead, we refer the readers to \cite{zhao2018mean,zhao2019optimal,zhao2018optimal} for a comprehensive discussion on how to benefit from such a portfolio.  However, we briefly explain about three standard  performance metrics, often used in the literature, for the numerical experiments or portfolio investment \cite{zhao2018mean, zhao2019optimal}.

\gap

\noindent $\bullet$ Cumulative profit and loss ($P\&L$): this tool measures the cumulative return of a mean-reverting portfolio in one trading period from $t_1$ to $t_2$ based on the following formula:
\begin{equation*}
  \text{Cum.  P\&L} \,  (t_1,t_2)= \sum_{t=t_1}^{t_2} \text{P\&L}_t,
\end{equation*}
where $\text{P\&L}_t= y^Tr_t(t-t_o)-y^Tr_{t-1}(t-1-t_o)$ whenever   a long position is opened  and  $\text{P\&L}_t= y^Tr_t(t-t_o)-y^Tr_{t-1}(t-1-t_o)$ provided that a short position is opened at time $t_o$.
 For a given asset, we have
\begin{equation*}
    r_t(\tau)=\frac{p_t-p_{t-\tau}}{p_{t-\tau}}\approx \ln (p_t)-\ln (p_{t-\tau}),
\end{equation*}
and $p_t$ denotes the price of a considered asset at time $t$.   We use Table 1 in \cite{zhao2018mean}  with $d$ equal to the suggested standard deviation of the portfolio.

\gap

\noindent $\bullet$ Sharpe Ratio: this is another metric to examine the quality of a portfolio for a period of time from $t_1$ to $t_2$ is defined as $SR_{ROI}(t_1,t_2)=\mu_{ROI}/\sigma_{ROI},$ where $\mu_{ROI}=1/(t_2-t_1)\sum_{t=t_1}^{t_2} ROI_t$ and $\sigma^2_{ROI}=1/(t_2-t_1)\sum_{t=t_1}^{t_2} (ROI_t-\mu_{ROI})^2$. Clearly, a  portfolio with a bigger Sharpe ratio is more profitable.

\gap

\noindent
$\bullet$ Return on investment (ROI): this tool measures the investment return of a mean-reverting portfolio  is defined as follows:
\begin{equation*}
    \text{ROI}_t=\frac{\text{P\&L}_t}{\|y\|_1}.
\end{equation*}

The setup of the numerical experiments is as follows. We first select real data coming from  the U.S. stock market, i.e., the  Standard and Poor's 500 (S\&P 500) Index, which is often used in the state-of-the-art literature. Here, we first combine the asset pools suggested in \cite{cuturi2016mean,zhao2018mean,zhao2018optimal, zhang2020sparse} and then, add several assets to have an asset pool with 30 assets. The trading time period is considered from February 1st, 2012 to June 30th, 2014.
The log-prices of these assets are depicted in Figures \ref{fig1}, \ref{fig2} and \ref{fig3}.
The volatility threshold $\phi$ is chosen based on the idea proposed in \cite{cuturi2016mean}, namely,  we choose this parameter  to be larger than one fifth of the median variance of all  assets in the pool. Four different sparsity levels, i.e., $k=4,5,6,7$, are considered.
To the best of our knowledge,  the only existing method for (indirectly) tackling (\ref{pr: p}) is the SDP relaxation method in \cite{cuturi2016mean}. Thus, we only compare the proposed scheme denoted by PD-G with this method. Recall that the SDP relaxation method first solves (\ref{pr: sdp-of-p}) to get $Y^*$ and then solves (\ref{pr: sparse-pca}) for $H=Y^*$. The sparse PCA problem  (\ref{pr: sparse-pca}) is solved via the method given in \cite{luss2013conditional}. For the proposed PD-G scheme, we use the following stopping criteria:
\begin{equation*} 
\max
\left\{
\frac{\|x^{j,s}-x^{j,s-1}-\|_\infty}{\max \left(\|x^{j,s}\|_\infty,1 \right)},
\frac{\|y^{j,s}-y^{j,s}\|_\infty}{\max \left(\|y^{j,s}\|_\infty,1 \right)}
\right\}
\le
5\times 10^{-3},
\end{equation*}
 $\max
\left\{
\|x^j-y^j\|_{\infty}
\right\}
\le
5\times 10^{-4} $,
and $\rho_{j}=\sqrt{10}\rho_{j-1}$. 

%
%
%


\begin{figure}[H]
\centering
 \includegraphics[width=0.8\textwidth]{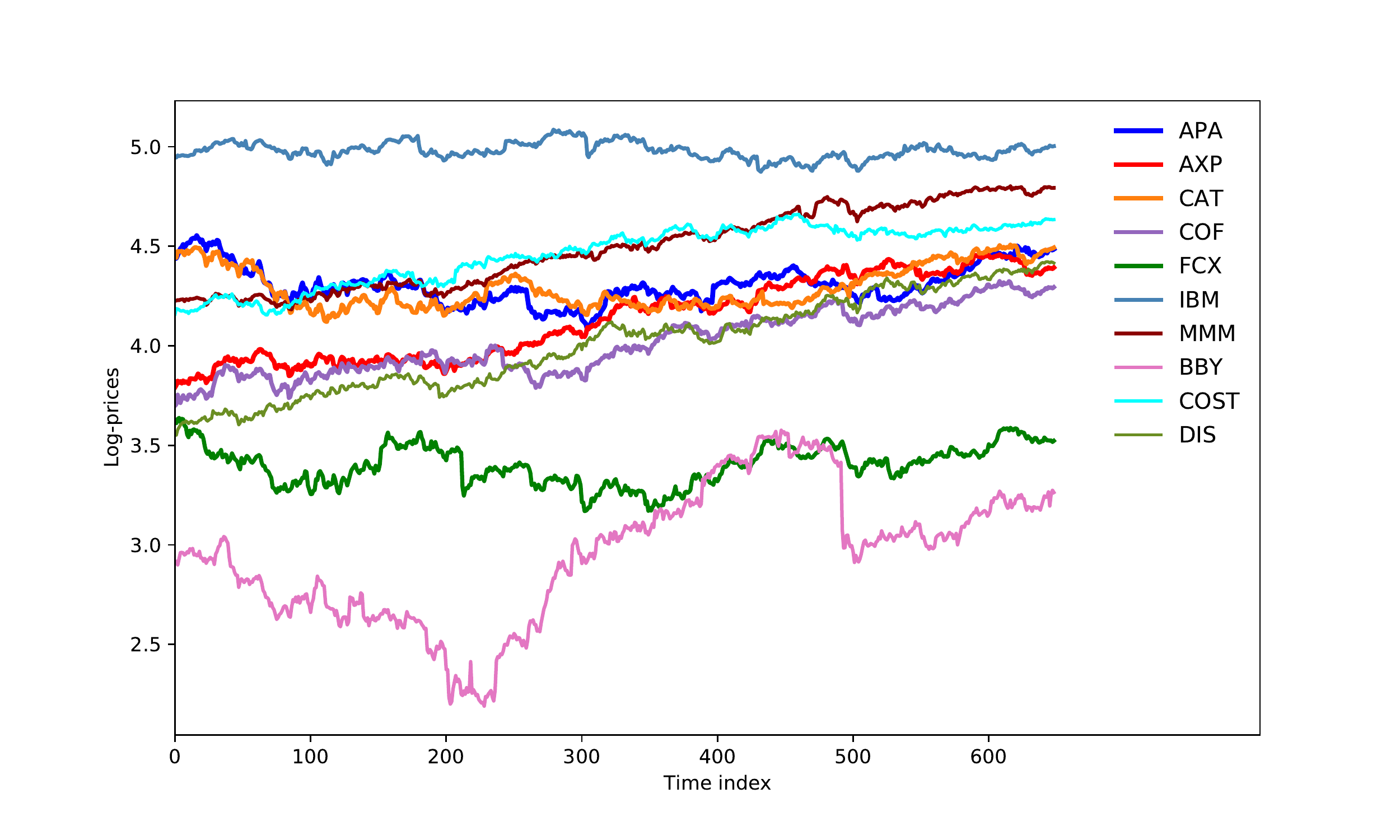}
  \caption{Log-prices for the first 10 assets.} \label{fig1}
  \end{figure}

\begin{figure}[H]
\centering
 \includegraphics[width=0.8\textwidth]{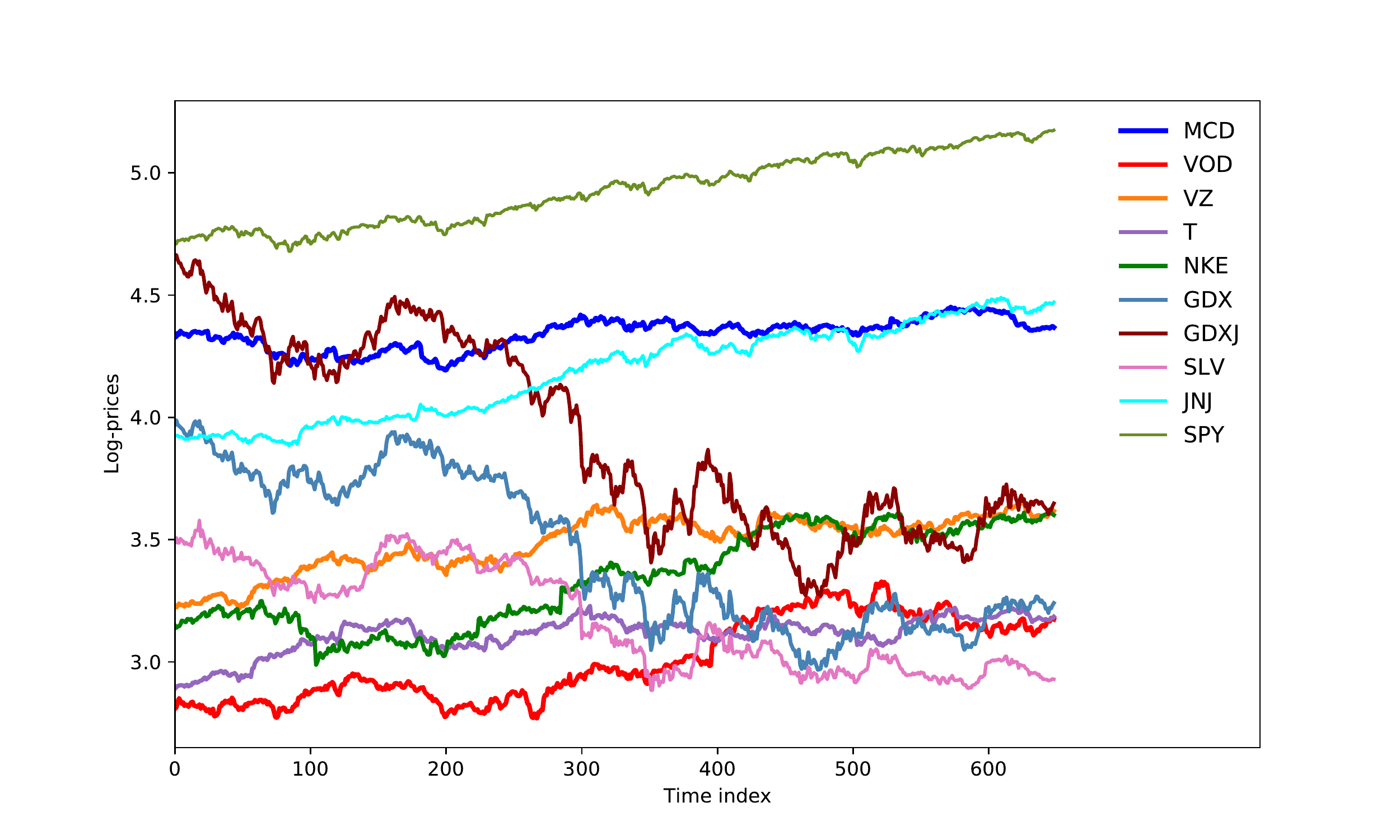}
  \caption{Log-prices for the second 10 assets.}  \label{fig2}
  \end{figure}

\begin{figure}[H]
\centering
 \includegraphics[width=0.8\textwidth]{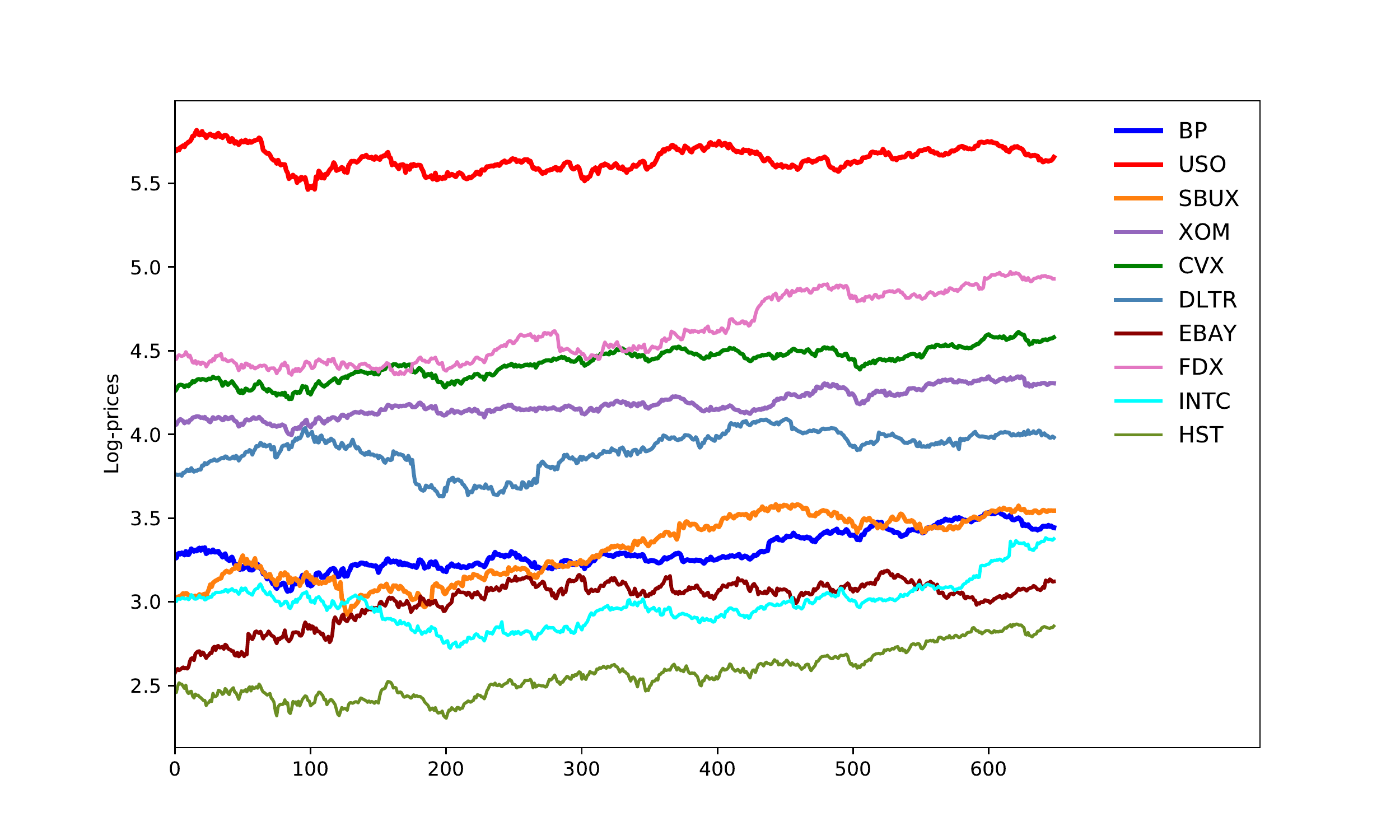}
  \caption{Log-prices for the third 10 assets.} \label{fig3}
  \end{figure}

\begin{figure}[H]
\centering
  \includegraphics[width=0.9\textwidth]{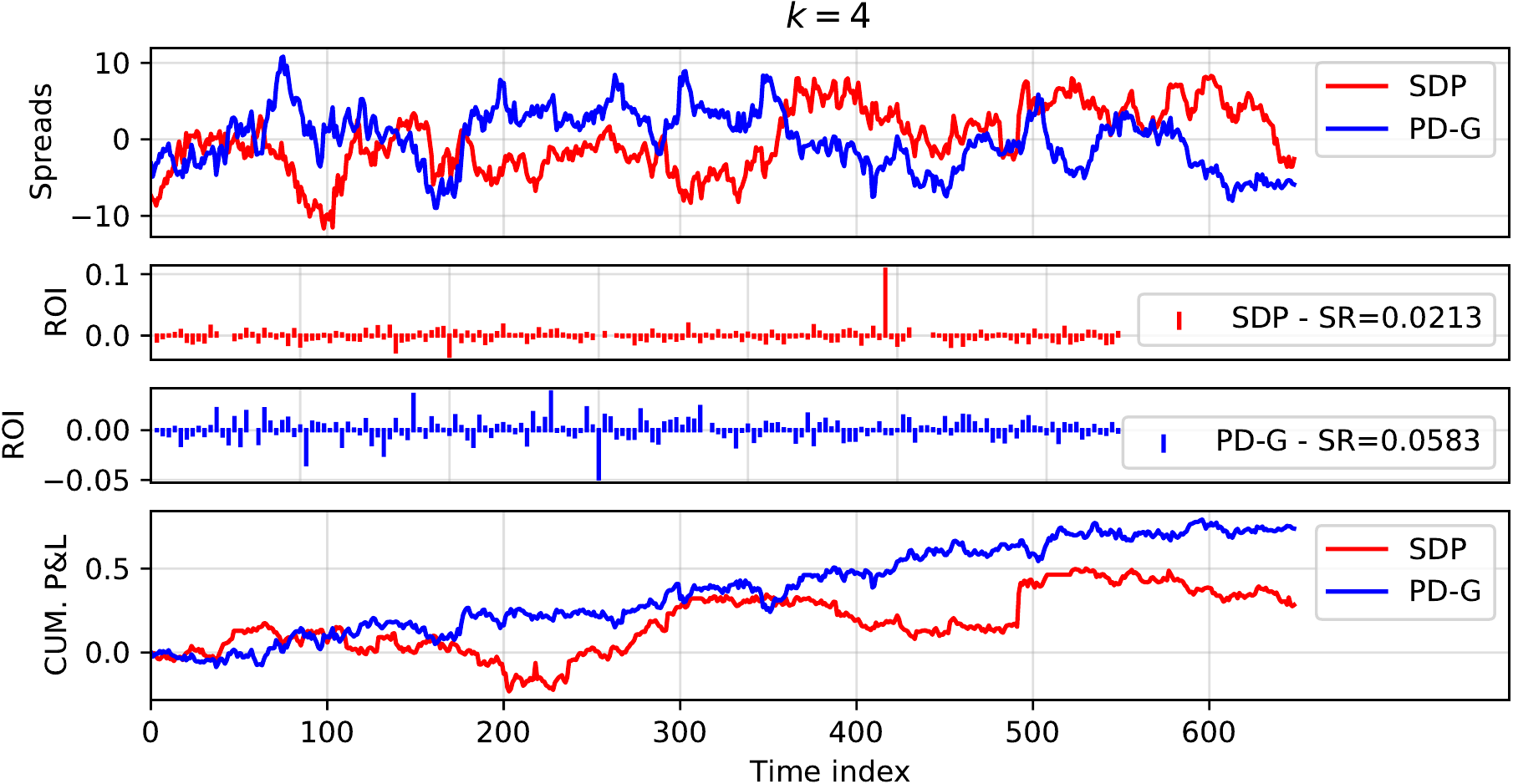}
  \caption{Time history for $k=4$; Optimal value: SDP=16.37 and PD-G=7.01} \label{fig:k=4_time}
\end{figure}

\begin{figure}[H]
\centering
 \includegraphics[width=0.85\linewidth]{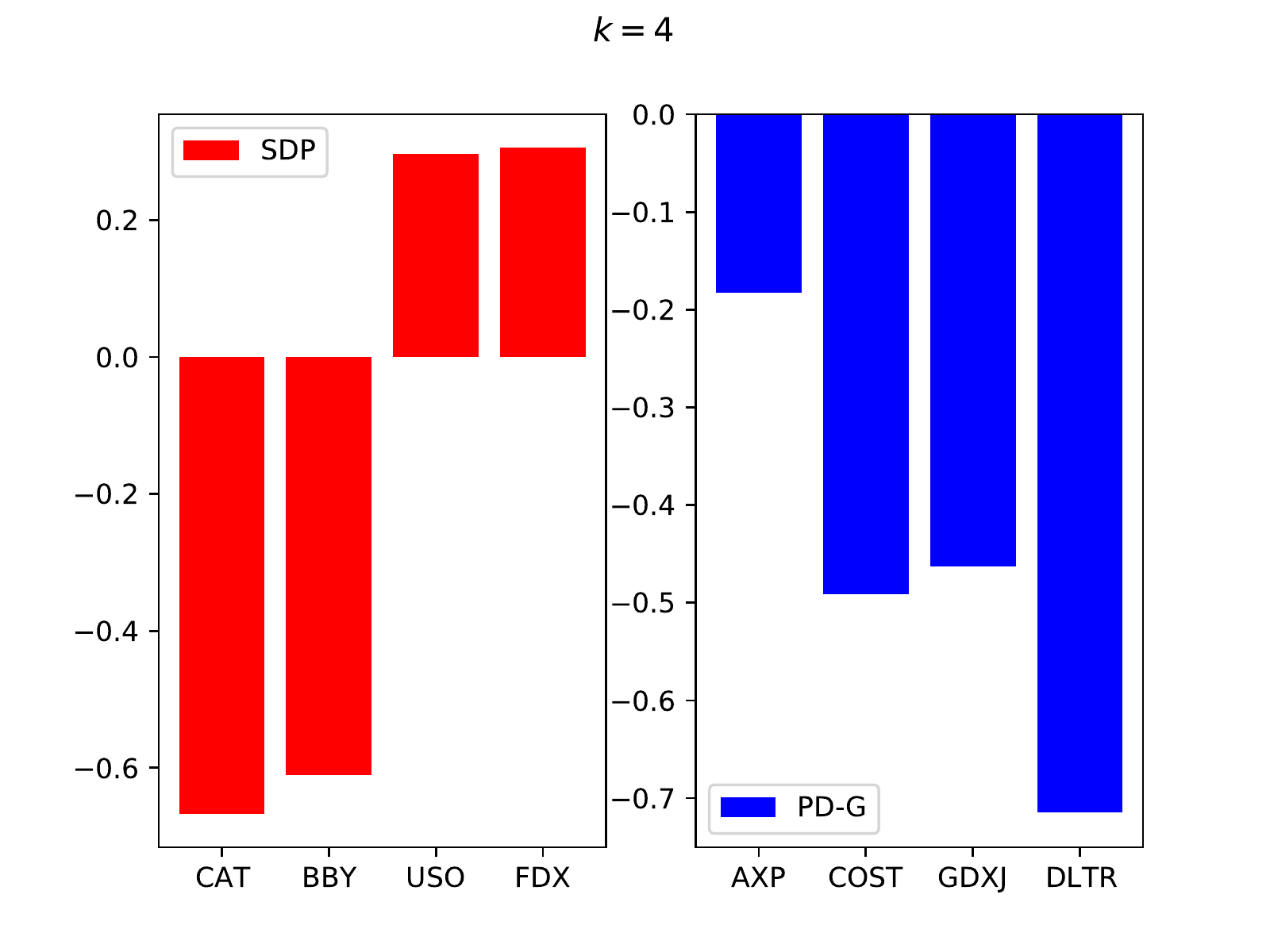}
 \caption{Portfolios selected  by SDP and PD-G for $k=4$ } \label{fig:k=4_portfolio}
  \end{figure}


\begin{figure}[H]
\centering
  \includegraphics[width=0.9\textwidth]{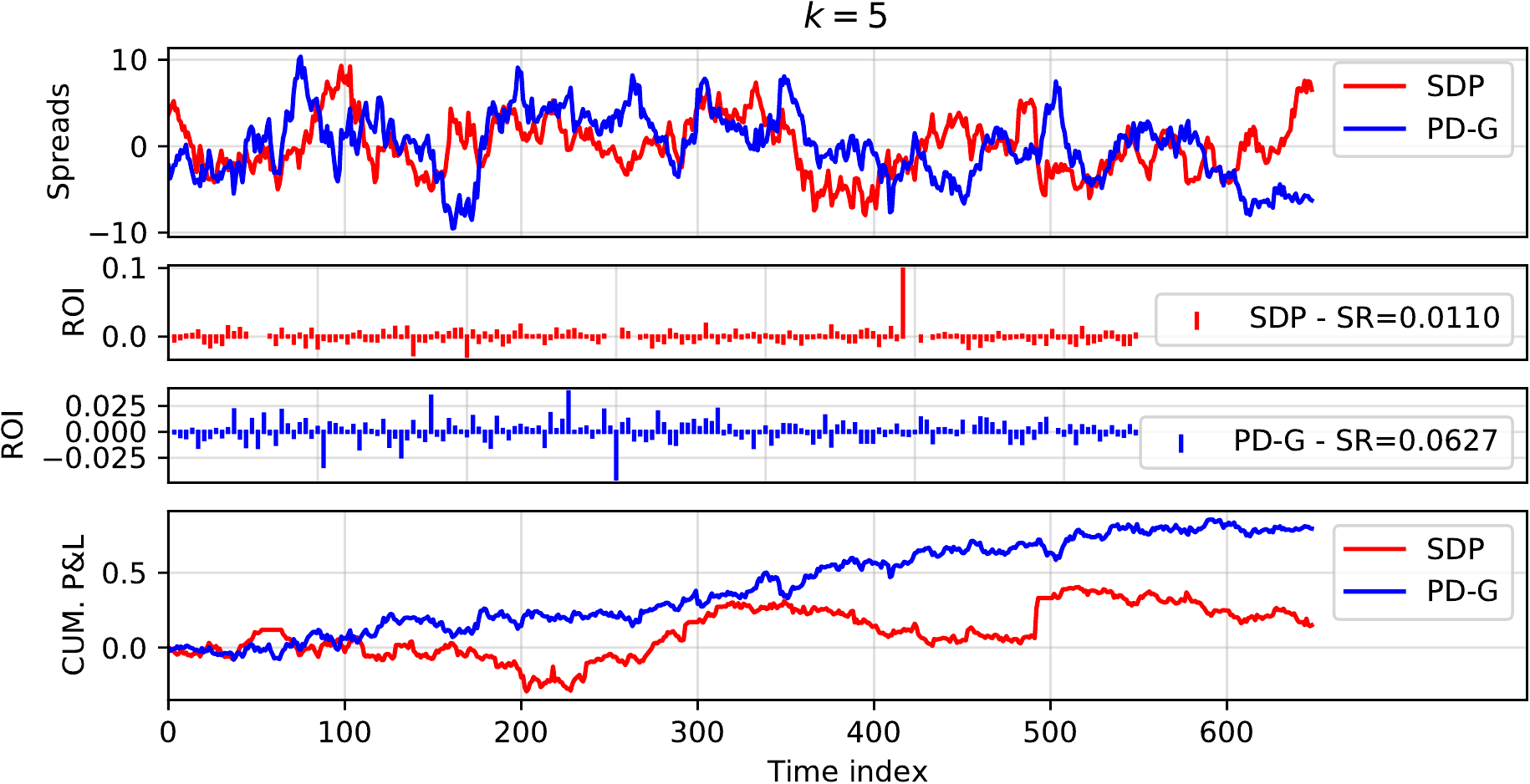}
  \caption{Time history for $k=5$; Optimal value: SDP=16.17 and PD-G=6.91.} \label{fig:k=5_time}
\end{figure}

\begin{figure}[H]
\centering
 \includegraphics[width=0.85\linewidth]{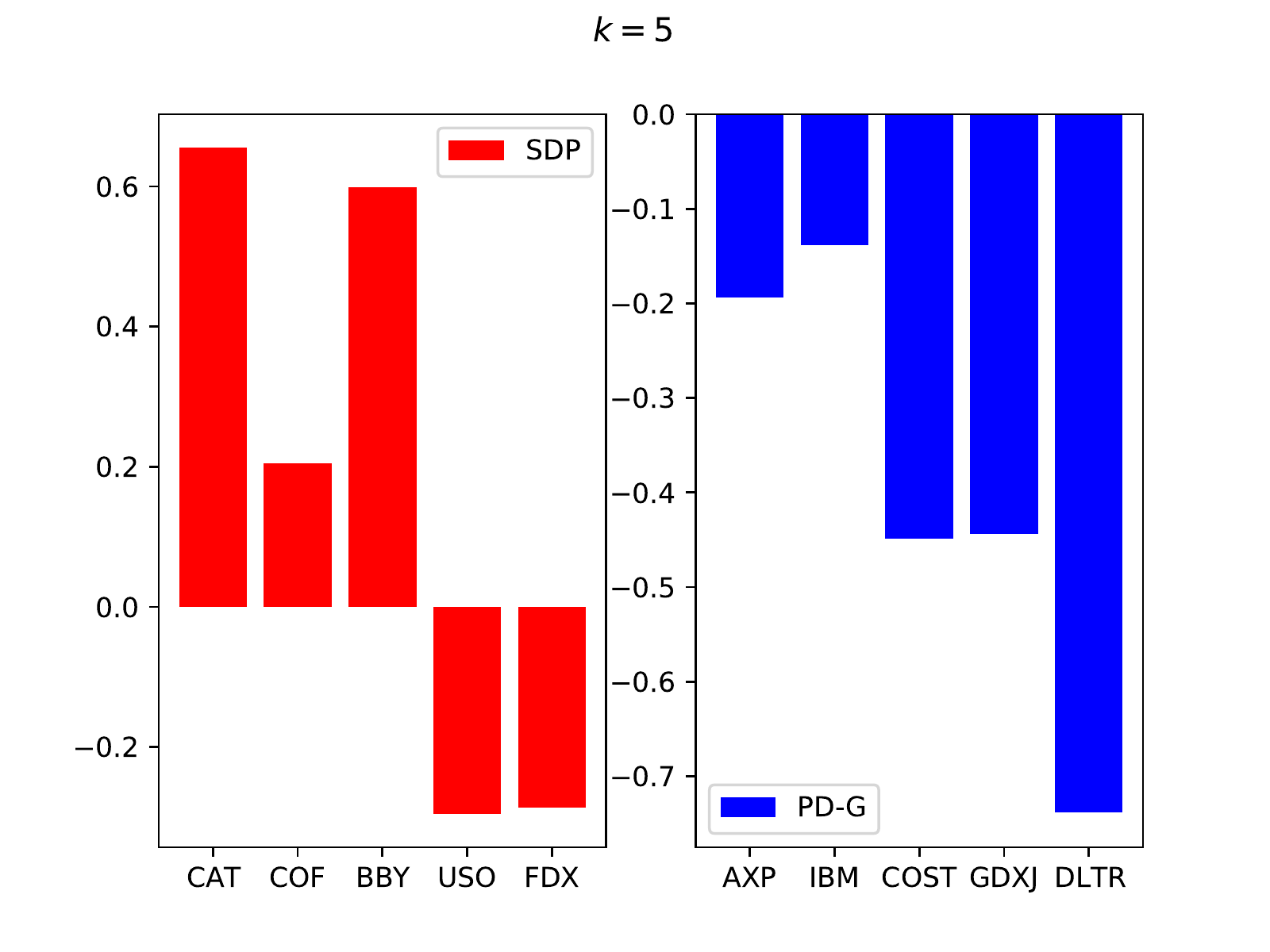}
 \caption{Portfolios selected  by SDP and PD-G for $k=5$ } \label{fig:k=5_portfolio}
  \end{figure}

\begin{figure}[H]
\centering
  \includegraphics[width=0.9\textwidth]{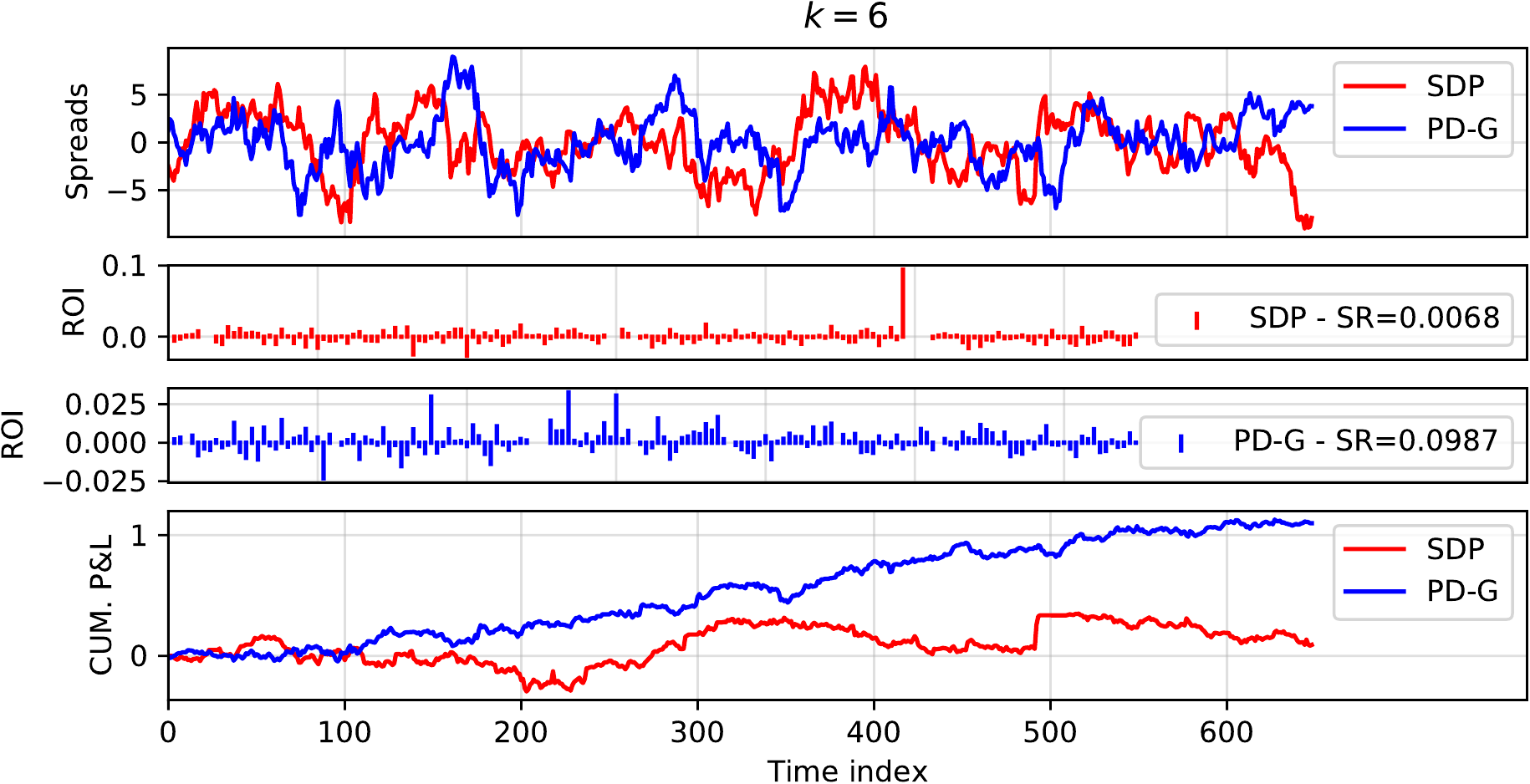}
  \caption{Time history for $k=6$; Optimal value: SDP=9.34 and PD-G=6.77.} \label{fig:k=6_time}
\end{figure}

\begin{figure}[H]
\centering
 \includegraphics[width=0.85\linewidth]{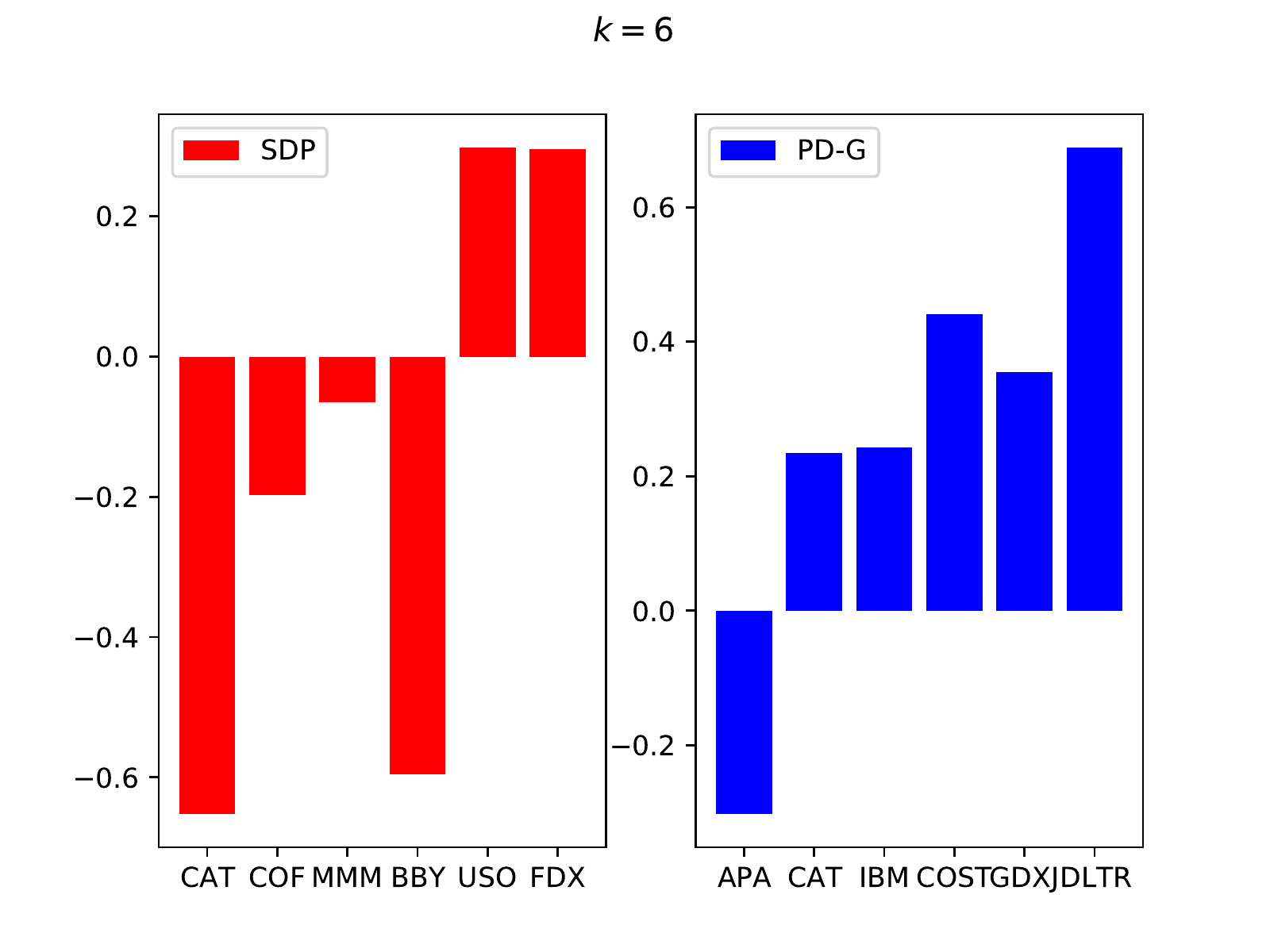}
 \caption{Portfolios selected  by SDP and PD-G for $k=6$ }  \label{fig:k=6_portfolio}
  \end{figure}

\begin{figure}[H]
\centering
  \includegraphics[width=0.9\textwidth]{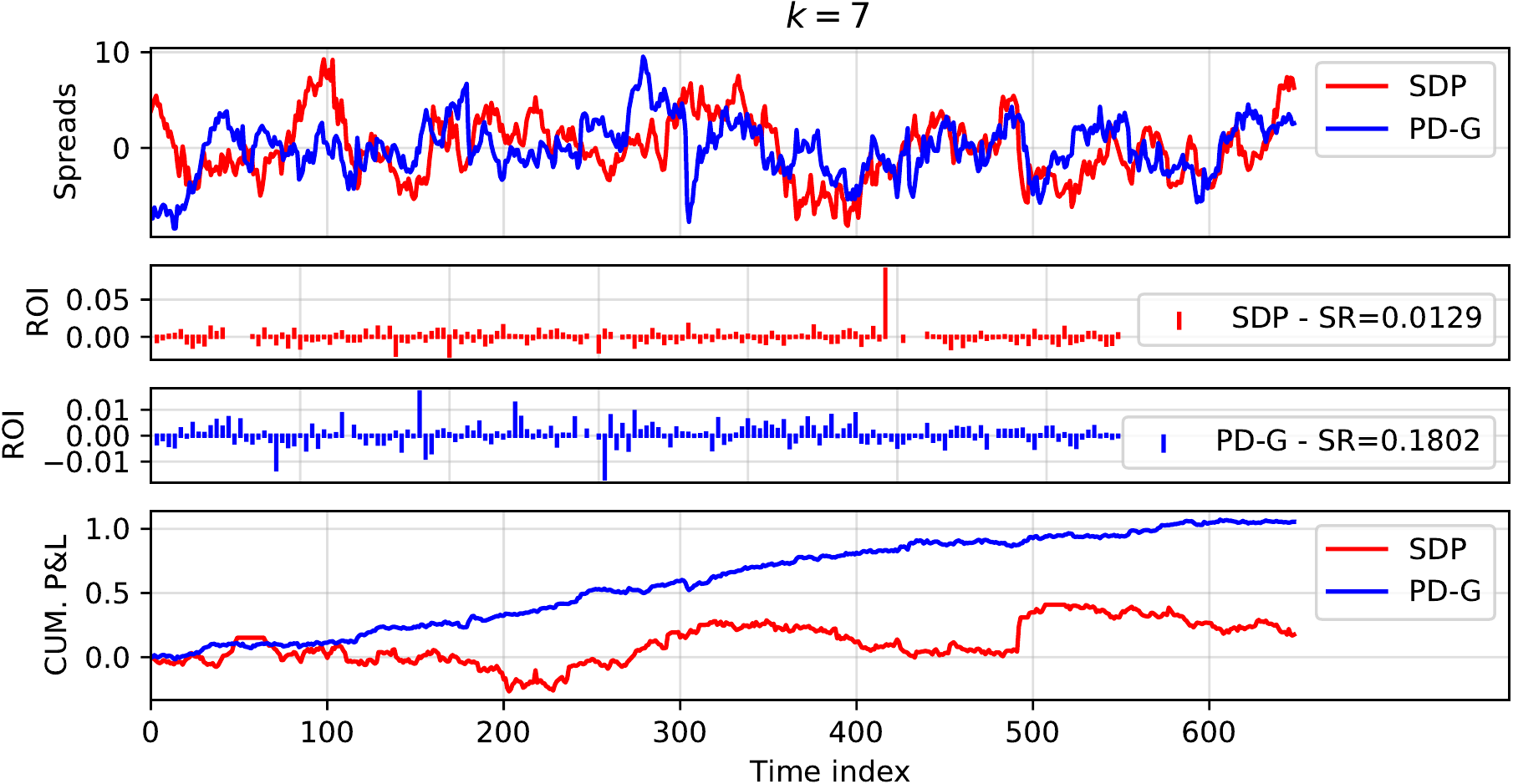}
  \caption{Time history for $k=7$; Optimal value: SDP=9.33 and PD-G=6.66.} \label{fig:k=7_time}
\end{figure}

\begin{figure}[H]
\centering
 \includegraphics[width=0.85\linewidth]{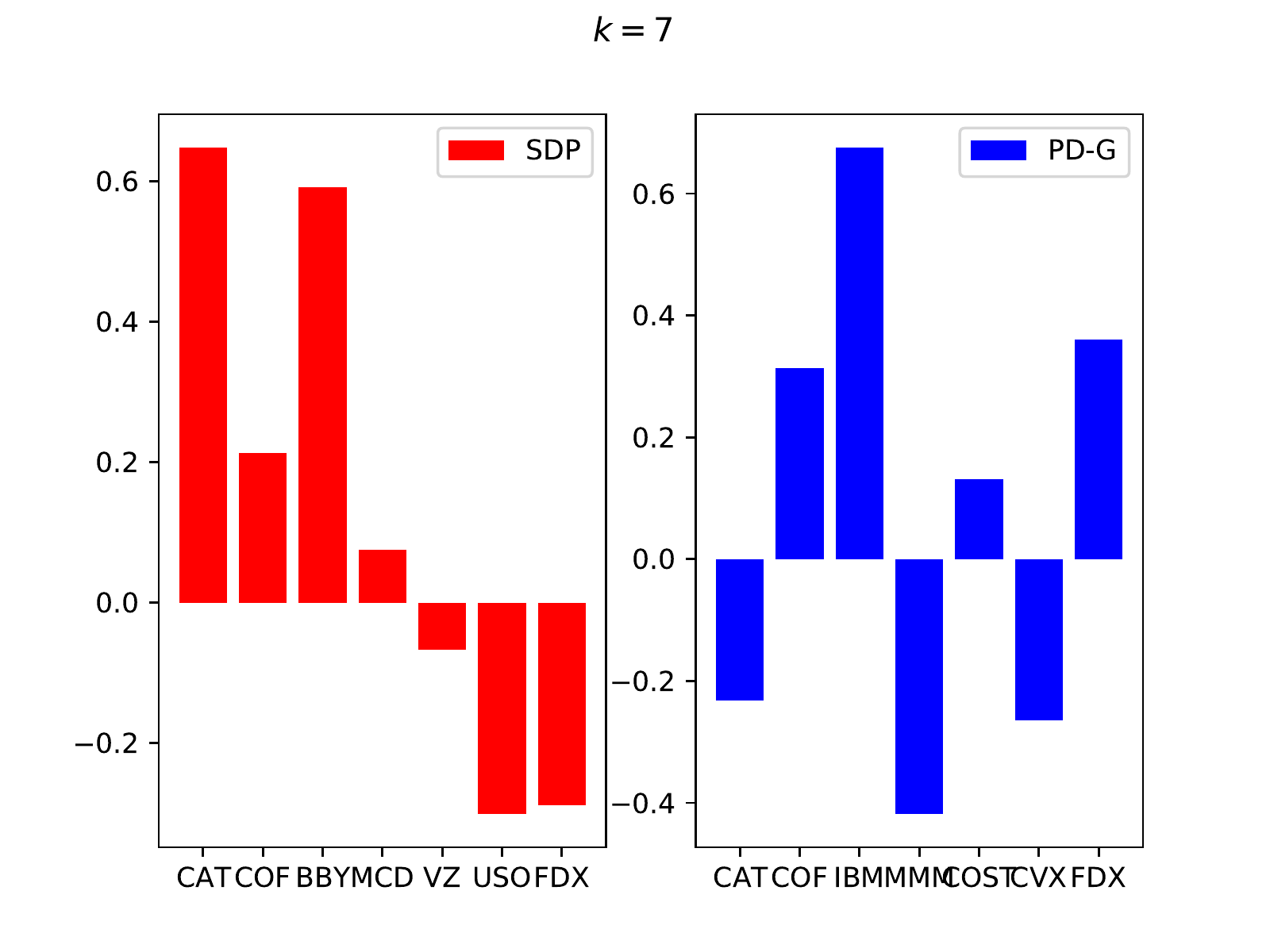}
 \caption{Portfolios selected  by SDP and PD-G for $k=7$ }  \label{fig:k=7_portfolio}
  \end{figure}

We compare the profitability of  the proposed PD-G scheme with the SDP relaxation method for each sparsity level $k$. The numerical results are displayed in Figures~\ref{fig:k=4_time}-\ref{fig:k=7_portfolio} for different $k$'s, where both the time history of spreads, return on investment (ROI), and cumulative profit and loss (Cum. P\&L) and the portfolios selected by the two methods are shown for each $k$. 
It is seen that for each $k$, the spread produced by the PD-G method captures the desired mean-reverting property very well, while the spread from the SDP relaxation method does not always demonstrate this property over the whole period in consideration. It is also seen that the portfolios chosen by the PD-G method achieve better Sharpe ratios compared to those from the SDP method. More precisely, the Sharpe ratio from the PD-G method is twice larger for $k=4$, six times larger for $k=5$, much larger for $k=6$, and twice bigger for $k=7$ to that of the SDP method, respectively. Further, the figures of cumulative returns for the proposed PD-G method show a similar increasing trend for each  sparsity level whereas  the inconsistent trends are shown from the SDP method. Consequently, the final cumulative returns generated by the PD-G method are  bigger  for each $k$. Besides, the  optimal values obtained from the two methods are quite different. In particular, the PD-G method achieves much smaller optimal values than the SDP method for all the four sparsity levels. Hence, the proposed PD-G method not only attains favorable theoretical properties but also outperforms the SDP relaxation scheme on the benchmark data set.

%

\section{Conclusion}

This paper proposes a two-stage algorithm to solve the mean-reverting portfolio optimization problem subject to sparsity and volatility constraints. In the first stage, a penalty decomposition scheme is used, and in the second, a SDP-relaxation based greedy algorithm is invoked. Theoretical properties of this algorithm are established, and numerical results demonstrate the efficacy of the proposed scheme.

%
\section*{Acknowledgement}

We greatly appreciate Dr. Zhaosong Lu of University of Minnesota for bringing our attention to the penalty decomposition method with improvement via a greedy algorithm for the mean-reverting problem and many helpful discussions on the proposed scheme and its implementation.
We also very much appreciate Dr. Shuzhong Zhang of University of Minnesota  for several prolific discussions on  the SDP relaxation topic and further,  providing us with many helpful references. Lastly, we deeply thank Mr. Hartmut Durchschlag of Cargill for many insightful inputs on mean-reverting portfolios.



{\small
\bibliographystyle{abbrv}
\bibliography{main}
}

\end{document}